\begin{document}

\newtheorem{theorem}{Theorem}[section]
\newtheorem{lemma}[theorem]{Lemma}
\newtheorem{proposition}[theorem]{Proposition}
\newtheorem{corollary}[theorem]{Corollary}
\newtheorem{conjecture}[theorem]{Conjecture}
\theoremstyle{definition}
\newtheorem{remarks}[theorem]{Remarks}
\newtheorem{note}[theorem]{Remark}
\newtheorem{definition}[theorem]{Definition}

\newenvironment{example}[1][Example]{\begin{trivlist}
\item[\hskip \labelsep {\bfseries #1}]}{\end{trivlist}}
\newenvironment{remark}[1][Remark.]{\begin{trivlist}
\item[\hskip \labelsep {\bfseries #1}]}{\end{trivlist}}

\title{Billiards With Bombs}
\date{May 3, 2015}
\author{Edward Newkirk, Brown University}

\begin{abstract}
In this paper, we define a variant of billiards in which the ball bounces around a square grid erasing walls as it goes. We prove that there exist periodic tunnels with arbitrarily large period from any possible starting point, that there exist non-periodic tunnels from any possible starting point, and that there are versions of the problem for which the same starting point and initial direction result in periodic tunnels of arbitrarily large period. We conjecture that there exist starting conditions which do not lead to tunnels, justify the conjecture with simulation evidence, and discuss the difficulty of proving it.
\end{abstract}

\maketitle

\begin{centering}
\section{Introduction}
\end{centering}

The classic arcade game Breakout involves using a paddle to bat a ball against a pile of bricks which are erased when the ball bounces off them, in an attempt to clear away all the bricks before the ball can run off the screen. Mathematicians have studied bouncing balls in several contexts, most notably in connection with billiards; one  might wonder what would happen if the trick of erasing hit obstacles were brought to such study from Breakout. The most Breakout-like model, where the ball erases the 2-skeleton of some initial array of obstacles, has been studied by Xavier Bressaud and Marie-Claire Fournier \cite{bressaud}; this paper will attempt to expand the idea of erasing obstacles into a wider set of problems, allowing the ball to erase the 1-skeleton of a square grid in a variety of patterns.

We refer to these patterns as "bombs". A bomb is defined as a pattern of walls with one particular wall marked and one side of the marked wall chosen. Every time the ball hits a wall, it erases a pattern of walls corresponding to the bomb, rotated in such a way that the ball is hitting the marked wall from the chosen side. An example of a slightly more complicated bomb is in Figure \ref{wingedwedgesample}.

We define a dynamical system by setting a particle down in the unit square, in a plane covered by a unit square grid; giving it an initial direction of motion, which we'll typically refer to in terms of the slope; defining a bomb; and allowing it to reflect off walls as in conventional billiards, erasing walls in the bomb's shape as it goes. We study the behavior of the particle as it runs off to infinity, investigating which combinations of bomb, starting position, and starting direction result in predictable behavior. 

Since the ball erases every wall it hits, it cannot settle into a periodic orbit as it sometimes does in conventional billiards, but it can and does dig periodic tunnels where (say) every six collisions it moves one square up and one to the right, with each collision corresponding to the sixth-previous collision displaced by $(1,1)$ (see figure \ref{threepics}). There are also cases where the ball seems to dig in one or more directions without ever settling into a perfectly periodic pattern, so we define tunneling behavior slightly more broadly:  

\begin{definition}
\label{tunneldefs}
A \emph{band} is the set of points in the plane within a given finite distance $\epsilon$ of some ray $\ell$. We say a particle \emph{tunnels} for a given starting point and direction if, when the particle is allowed to run infinitely from those starting conditions, every wall collided with outside a disc of radius $r$ is contained in a finite union of bands. A particle whose infinite orbit requires two bands no matter what finite value of $r$ is used is said to create a \emph{bidirectional} tunnel. A particle digs a \emph{periodic} tunnel if, for sufficiently large $r$, there exists some integer $k$ such that the wall hit in collision $n+k$ is always of the same type as the wall hit in collision $n$ and the displacement from $n$ to $n+k$ depends only on which band contains wall $n$. The least such $k$ is the \emph{period} of the tunnel.
\end{definition}

If the ball digs periodically in some particular direction, it will clearly stay within bounded distance of a ray drawn through two separate instances of the same step in the period, so our initial example of periodic tunneling does satisfy the formal definition. We permit the displacement of a periodic tunnel to vary by band because that's necessary for bidirectional tunnels - the displacement along a given band is obviously in the direction of the ray defining the band, so if there are bands in multiple directions then we will have multiple possible displacements. It will turn out that all tunnels are at most bidirectional, and that if a tunnel is bidirectional then its two directions are directly opposite (Lemma \ref{twodirections}), so when a particle digs a tunnel we can sensibly talk about the slope of the tunnel.

The majority of our time will be spent studying the simplest bomb shape possible, where the particle erases only the walls it hits; as it turns out, we can find a vast number of tunnels with this bomb even looking at a relatively narrow set of directions.

\begin{theorem}
\label{headlinethm}
For every point $P$ in the unit square, there exists some $\epsilon(P) \leq \frac{1}{17}$ such that for all but countably many slopes $s \in [3, 3+\epsilon(P)]$ a particle starting from $P$ with the single-wall bomb and slope $s$ will tunnel, and the tunnel will have slope $1 + \frac{3s-9}{25-8s}$.. If $s$ is rational, any tunnel will be periodic. If $s$ is irrational, the tunnel will not be periodic; there are therefore uncountably many aperiodic tunnels from every starting point in the square. (Proved as Theorem \ref{everypointtunnels} and Corollaries \ref{periodiconcestarted}, \ref{aperiodiconcestarted}, and \ref{uncountabletunnels}).
\end{theorem}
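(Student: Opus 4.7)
The plan is to identify a stable tunneling motif — a short, repeating combinatorial pattern of wall collisions that the particle enters once its trajectory has settled into the tunnel — and then show that every starting point $P$ with slope in the narrow window $[3, 3+\epsilon(P)]$ eventually funnels into this motif. Concretely, I would first draw out the trajectory for $s$ exactly equal to $3$ in a fresh region of the grid and trace the sequence of bounces under the single-wall bomb (walls erased on contact, so subsequent passes through the same cell are unobstructed). Counting the number of horizontal and vertical wall hits per cycle and computing the net displacement vector gives the tunnel slope as a function of $s$; after simplification this should reproduce $1 + \frac{3s-9}{25-8s} = \frac{16-5s}{25-8s}$, which is a M\"obius transformation of $s$ and so depends smoothly on the initial slope in a neighborhood of $s=3$.

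The main obstacle is the transient phase: an arbitrary starting point $P$ in the unit square need not lie on the motif initially, and one must verify that the first handful of collisions deterministically drop the particle into a configuration from which the motif repeats. This is a case analysis over where $P$ sits within the unit square and how its first bounce unfolds, and the bound $\epsilon(P) \leq \frac{1}{17}$ is the quantitative price of requiring the slope window to be narrow enough that no case in the analysis escapes into a competing combinatorial regime. I expect this to be the most delicate step of Theorem \ref{everypointtunnels}, since one has to rule out accidental near-misses where a slightly different slope would send the trajectory along a different sequence of wall hits.

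Given the motif, the countable exceptional set consists of slopes for which, during either the transient or any subsequent cycle, the particle hits a grid vertex or grazes a wall tangentially — each such coincidence is encoded by a single algebraic equation in $s$, and countably many such equations have countably many roots in total. For rational $s$, all relevant intersection coordinates within a cycle are rational with uniformly bounded denominators, so the combinatorial motif closes up into a true periodic orbit (Corollary \ref{periodiconcestarted}); for irrational $s$, the motif repeats at the combinatorial level but the exact positions drift irrationally from cycle to cycle, producing a trajectory that remains in its band but is genuinely aperiodic (Corollary \ref{aperiodiconcestarted}). The uncountability of aperiodic tunnels (Corollary \ref{uncountabletunnels}) is then immediate: removing a countable exceptional set from the uncountable irrationals in $[3,3+\epsilon(P)]$ leaves uncountably many $s$, each giving a distinct aperiodic tunnel from $P$.
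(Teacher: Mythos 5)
Your outline has the right large-scale shape (find a repeating collision pattern, control the transient, discard a countable set of corner-hitting slopes), but it misses the central mechanism, and without it the key formula cannot be derived. Tracing the trajectory for $s$ exactly $3$ produces a single motif with displacement $(1,1)$ per period, i.e.\ tunnel slope $1$ \emph{independent of $s$}; no amount of counting hits in that one motif yields a nonconstant function of $s$. What the paper actually does (Lemma \ref{reorglemma}) is decompose the cutting sequence of any $s\in(3,3+\tfrac1{17}]$ into chunks $HHHV$ and $HHHHV$, and identify \emph{two} local behaviors: the slope-$3$ column-clearing pattern driven by $HHHV$ chunks (displacement $(1,1)$ per three chunks), and a two-column ``reorganization'' triggered by each $HHHHV$ chunk (displacement $(2,3)$ per fourteen chunks). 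Writing $s=3+\tfrac ab$, there are $a$ reorganizations and $b-a$ ordinary chunks per period, and the tunnel slope $1+\frac{3s-9}{25-8s}$ is exactly the weighted average of the two displacements. Relatedly, you misplace the source of the $\tfrac1{17}$: it is not a transient-phase condition on $P$ but the steady-state requirement that consecutive $HHHHV$ chunks be separated by at least $16$ ordinary chunks so that each reorganization finishes (and restocks a fresh column) before the next begins; the $P$-dependent $\epsilon(P)\le\tfrac1{17}$ comes separately, from a continuity lemma showing nearby slopes share their first $n$ collisions with slope $3$ long enough to enter the pattern.

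A second gap is the irrational case. You assert that ``the motif repeats at the combinatorial level'' for irrational $s$, but the placement of the $HHHHV$ chunks is then governed by an aperiodic Sturmian sequence, so no finite pattern repeats and it is not automatic that the collision set stays within a single band of the claimed slope. The paper handles this (Corollary \ref{aperiodiconcestarted}) by squeezing: choose rational $p<s<q$ in the interval that agree with $s$ until tunneling has begun, observe that the $s$-trajectory is trapped between the $p$-band and the $q$-band column by column, and let $p,q\to s$ using the continuity and monotonicity of the slope formula. Some argument of this kind is needed; the drift you describe explains why the tunnel is aperiodic but not why it is a tunnel at all.
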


While the single-wall bomb is the most straightforward, it is not the only one we consider. We'll look at entire families of triangular-wedge-shaped bombs, and see that

\begin{theorem}
\label{bombsheadline}
Let $q \geq 6$ be an even integer. There exist infinitely many bombs for which a particle starting from the left-hand wall with slope $\pm 3$ tunnels with period $q$. (Theorem \ref{s3n1mod3} defines the bombs explicitly).
\end{theorem}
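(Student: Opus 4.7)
The plan is to construct, for each even integer $q \geq 6$, an infinite collection of triangular-wedge bombs that produce periodic tunnels of period $q$ from the left-hand wall at slope $\pm 3$. The bombs will be indexed by a size parameter $n$ in the residue class $n \equiv 1 \pmod 3$ suggested by the theorem's label, with $n$ (or an auxiliary size parameter) tuned to achieve the prescribed period.

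First I would describe the wedge family explicitly: a size-$n$ bomb consists of an $n \times n$ right-triangular staircase of wall segments, with the marked wall along one leg and the chosen side oriented outward, so that upon impact the bomb erases exactly the triangle of walls lying ahead of the ball's direction of motion. The slope $3$ hypothesis is crucial: each unit of horizontal motion corresponds to three units of vertical motion, so the sequence of grid-wall crossings between two consecutive bounces inside a single unit cell is completely determined by the ball's starting phase on the left wall, giving us strong control over which walls are candidates for the next collision.

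Next comes the trajectory analysis, which is the heart of the proof. Starting from an appropriately chosen point on the left wall, I would trace the first $q$ collisions, verifying collision by collision that the wall the ball is scheduled to bounce from is still intact while every grid wall on the straight segment between consecutive collisions has already been cleared by some earlier bomb detonation. After $q$ collisions the ball should occupy a translate of its starting configuration, with translation vector $(a,b)$ satisfying $b/a = \pm 3$; together with Lemma \ref{twodirections} this gives a genuine periodic tunnel of period $q$ in the sense of Definition \ref{tunneldefs}. The infinitude claim then follows easily: any wall of the bomb lying outside the ball's (finite) trajectory during one period has no effect on the dynamics, so decorating the wedge with arbitrary extra walls outside that region yields infinitely many distinct valid bombs for the same $q$.

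The chief obstacle will be the combinatorial bookkeeping in the trajectory analysis: certifying, collision by collision, that the cumulative erasure pattern from prior detonations clears exactly the walls blocking the ball and never erases the wall the ball is about to reflect off. I expect this to require a careful case split indexed by the position of each collision within the wedge---plausibly by collision number modulo $q$, or by the ball's $y$-coordinate modulo $3$---and I anticipate that the congruence $n \equiv 1 \pmod 3$ will emerge naturally as the arithmetic condition that synchronizes slope-$3$ motion with the wedge's self-similar erasure pattern over a full period.
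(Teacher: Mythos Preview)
Your plan has two genuine gaps that would prevent it from going through.

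First, the periodicity criterion you set up is wrong: you require the translation vector $(a,b)$ after $q$ collisions to satisfy $b/a = \pm 3$ and invoke Lemma~\ref{twodirections}. But that lemma only forces the tunnel slope to match the particle slope for \emph{bidirectional} tunnels; the tunnels here are unidirectional and their slopes are not $\pm 3$. In the paper's construction the $p=0$ case translates straight up by $(0,12k+4)$, and for $p\geq 1$ the displacement is $(2\cdot 2^p(2k+1)-4k-2,\; 6(2k+1)2^p-2)$. If you insist on a slope-$3$ displacement you will never close the loop.

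Second, and more fundamentally, you have not identified the mechanism that selects the period. You treat $n\equiv 1\pmod 3$ as ``the'' arithmetic condition, but that congruence does nothing to fix $q$; wedges with $n\equiv 1\pmod 3$ produce \emph{all} even periods $\geq 6$, and which one you get is governed by a second invariant you never mention: the $2$-adic valuation $p$ of $n+2$. The paper writes $n=3(2k+1)2^p-2$ and shows the period is exactly $6+2p$, independent of $k$. The reason is a self-similarity: after the first four collisions the configuration above the particle is identical to having just finished collision~$2$ inside a wedge of size $3(2k+1)2^{p-1}-2$, so induction on $p$ gives $2p$ bounces in a zig-zag phase before a fixed $6$-collision closing routine. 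Your proposed casework ``by collision number modulo $q$'' or ``$y$-coordinate modulo $3$'' does not see this recursive structure, and without it you have no handle on $q$.

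A smaller point: your route to infinitude (pad the bomb with irrelevant far-away walls) is different from the paper's and arguably works, but it is not what Theorem~\ref{s3n1mod3} delivers. The paper obtains infinitely many \emph{wedge sizes} for each $q$ by varying $k$ with $p=(q-6)/2$ fixed, which is the structural content of the statement as advertised.
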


The paper is organized as follows:

\subsection*{Section 2: Definitions And Conventions.}
Section 2 will define billiards with bombs formally on a square grid and introduce the standard language we'll use to discuss it. We also prove the claim from this introduction that tunnels are at most bidirectional, and give a formal definition of the slope of a tunnel. 

\subsection*{Section 3: Cutting Sequences.}
We describe how we can use previous research into cutting sequences on the square torus, explaining the basis of the simulations which inspired our conjectures and the tools we'll be using to prove (some of) those conjectures.

\subsection*{Section 4: An Infinite Set Of Slopes Giving Periodic Tunnels.}
Sections 4-8 all deal with the case where the ball erases a single wall, which is the simplest case and the one which has been studied in the most detail. In Section 4 we describe the simplest kind of periodic tunneling that the Breakout system might display and show that there is a set of starting conditions, with infinitely many integer slopes, for which the system produces such periodic tunnels. Our argument only works for integer slopes and fractional slopes with denominator 3; we'll expand the number of periodic tunnels later in the paper.

\subsection*{Section 5: Tunneling With Reorganization.}
Slopes sufficiently close to the ones corresponding to the periodic tunnels from Section 4 often (but not always) seem to imitate those periodic tunnels with occasional reorganizational hiccups. In Section 5, we display several examples of this behavior, one example of a case where it doesn't seem to happen, and then work through one specific example by hand to show that, if they get started, slopes in the interval $(3, 3\frac{1}{17}]$ do produce tunnels-with-reorganization. We also observe that, if they start tunneling, rational slopes in that interval must produce periodic tunnels and irrational slopes cannot. 

\subsection*{Section 6: Continuity And Uncountable Tunneling Slopes.}
In Section 6, we discuss the ways in which close starting conditions produce similar behavior, effectively defining continuity for the Breakout problem, and use this knowledge to show that the tunneling-with-reorganization behavior from Section 5 does indeed get the start it needs for slopes sufficiently close to 3 (the exact distance depends on the starting point).

\subsection*{Section 7: Slope 146 And Delayed Tunneling.}
In Section 7, we discuss the most spectacular example we've found so far of starting conditions which take a long time to start tunneling. A particle with slope 146 will always eventually wind up digging a horizontal tunnel, but the conditions it needs to begin tunneling are sufficiently rare that it can take millions of steps to begin doing so. We'll describe the tunnel it eventually digs and list the time required to get there from every possible starting position.

\subsection*{Section 8: Non-Tunneling Behavior.}
In Section 8, we speculate as to what might happen when the particle fails to tunnel, and present a specific set of starting conditions from which we conjecture the particle clears the plane.

\subsection*{Section 9: Wedge-shaped Bombs.}
In Section 9, we discuss how the ball seems to behave when it erases a triangular wedge, conjecturing that for any integer slope there are infinitely many wedge sizes leading it to tunnel. We prove that for slope 3 there are infinitely many wedges leading to periodic tunnels with any even period $\geq 6$.

\subsection*{Section 10: Open Problems.}
In Section 10, we summarize the problems left open by sections 2-9, and briefly discuss further variants of the problem.

\subsection*{Appendix: Raw Integer Slope Simulation Data.}
We've run broad tests of integer slopes for a few different bomb shapes while trying to build intuition for the problem's behavior. This appendix links to google docs of the test results while briefly summarizing them.

\begin{centering}
\section*{Acknowledgements}
\end{centering}
The author thanks Xavier Bressaud for beginning the mathematical study of Breakout and Nicolas Bedaride for suggesting the 1-skeleton variant from which this paper grew. This work was done as part of a doctoral dissertation under the supervision of Richard Schwartz at Brown University; the author thanks Professor Schwartz for his advice and encouragement throughout this process, and the Brown math department as a whole for funding and a very congenial working environment.

\begin{centering}
\section{Definitions And Conventions}
\end{centering}

Starting with a unit square grid covering the plane, we create a dynamical system by creating a particle with the following four qualities:

\begin{itemize}
\item A starting point in the plane.
\item A starting direction of motion.
\item A choice of which edges on the square grid start out solid.
\item A pattern of edges on the square grid consisting of one marked horizontal edge (the one the particle will hit) and a finite (possibly 0) number of other edges, which may be horizontal or vertical. This pattern determines which walls will be erased, and will be referred to henceforward as the \emph{bomb}. 
\end{itemize}

The particle moves according to the following rules:

\begin{enumerate}
\item The particle always moves in a straight line until it collides with a solid wall.
\item When the particle hits a wall, it reflects off, preserving the angle of incidence, and erases the walls given by rotating the bomb so that the marked edge overlays the wall just hit and the particle is hitting the marked edge from below relative to the bomb.
\item If the particle hits a corner, it stops, whether the walls are still there or not. These cases are less interesting, so they are not our main focus. 
\end{enumerate}

\begin{figure}[h]
\begin{center}
\includegraphics[height=4 cm]{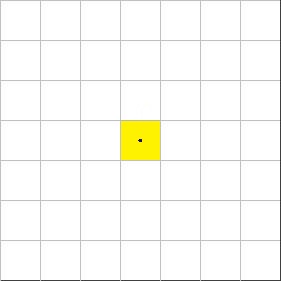}
\includegraphics[height=4 cm]{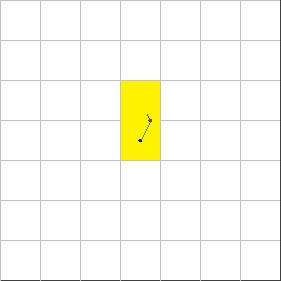}
\includegraphics[height=4 cm]{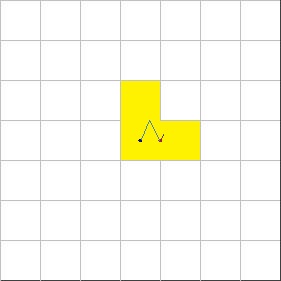}
\end{center}
\caption{\label{bouncepicture}
A particle's first two collisions after starting from the center of the square with initial direction $\left< \frac{1}{\sqrt{5}},\frac{2}{\sqrt{5}} \right>$ (equivalently, with intial slope 2). In this and subsequent simulated images, the starting square is marked by a blue dot and the red tail behind the particle displays its direction. Here, we've also tracked the particle's path in blue.
}
\end{figure}

\begin{remark}
\label{definitionchoices}
Every case discussed in this paper begins with every wall in the plane solid. We include the choice to do otherwise in our definition of the problem because a state of this dynamical system consists of the ball's location, the ball's direction, and the solidity of every individual wall; although the initial solidity is not something we vary here, it is part of the starting state and should therefore be chosen explicitly. Although we won't directly discuss it much, the state space is $R^2\times S^1 \times 2^Z$, with the system moving by horizontal/vertical reflection in the $S^1$ part (the particle's direction) and decreasing in $2^Z$ as walls are erased but never come back.

Another part of the definition worth discussing is the choice of isometries by which the bomb is laid over a hit wall. We only allow rotation, but it would be equally valid to redefine the system so that the bomb is oriented by rotation and reflection (i.e. rotate and reflect as if the particle is hitting the marked wall from below, moving up-right, with some convention for how to handle perfectly horizontal or vertical movement). Every bomb discussed in this paper is symmetric under reflection, so there's no practical distinction and we use the simpler definition, but there are bomb shapes for which reflection or lack thereof could make a big difference. 
\end{remark}

We see from the above that, since we're fixing the starting wall configuration, the system is completely determined by its starting point $(x,y)$, its starting direction, and the bomb shape. Translating the starting point by an element of $Z \times Z$ will translate the resulting open region by the same element with no other changes, so we can assume that the particle starts moving into the square $(0,1) \times (0,1)$. Similarly, reflecting the starting point and direction across the lines $x=\frac{1}{2}$, $y=\frac{1}{2}$, and $x=y$ will reflect the particle's future path and hence the open region across the same line with no other changes. By tolerating such reflections and ignoring the trivial cases where the particle's motion is perfectly horizontal or vertical, we can assume that the particle is heading up and to the right with slope $s>1$ and starting location in $[0,1) \times [0,1)$.

As the particle runs, it will clear an open region including its starting point; the majority of this paper will be devoted to studying what sorts of open region can (and do) result as the particle runs to infinity. Usually, the region can be described as a blob (so largely resisting formal definition) or as a tunnel in some specific direction(s). We've already defined what it means for a particle to tunnel, but we can almost immediately narrow down what sorts of tunnels can possibly occur. 

The most basic region that a particle might clear is a two-directional tunnel. Starting with a completely horizontal direction of movement, or completely vertical, will trivially result in the particle bouncing back and forth to create a horizontal (vertical) bidirectional tunnel, whose exact shape depends on the chosen bomb. As Figure \ref{slope1} shows, we see the same behavior with slope 1 for the single-wall bomb; in fact, when testing different bomb shapes, this sort of diagonal tunnel is by far the most common outcome from slope 1.
 
 \begin{figure}[h]
\begin{center}
\includegraphics[height=6 cm]{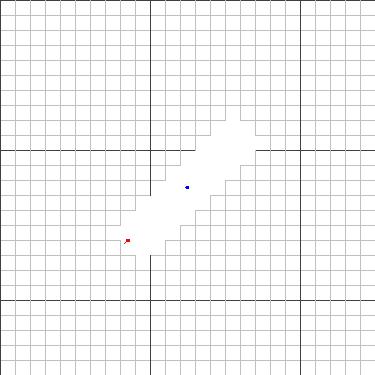}
\includegraphics[height=6 cm]{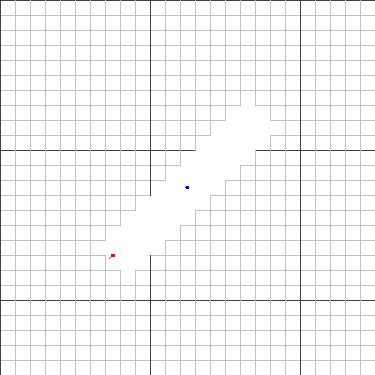}
\end{center}
\caption{\label{slope1}
Starting the particle with slope 1 and the most basic bomb shape (a single wall) results in a diagonal tunnel expanding in both directions with period 16; the particle advanced one full period between the two pictures. 
}
\end{figure}

The particle's starting place is actually irrelevant in this case: slope 1 gives a cutting sequence of period 2, so there are only two distinct starting areas, which are reflections of each other across the line $y=x$, which reflection preserves slope 1 and the resulting tunnel.

All three of these examples see the particle digging in two directions which are directly opposite one another and also happen to be $\pm$ the particle's initial direction of movement. It turns out that this must always be the case if the particle is to tunnel in more than one direction. The following theorem and its proof are directly inspired by (\cite{bressaud}, Lemma 4.4), which states the analogous result for the variant where the particle erases square bricks rather than one-dimensional walls; similarly, our definition of tunnels is a close paraphrase of his. 

\begin{lemma}
\label{twodirections}
A particle which tunnels with slope $s$ can have its limiting path contained in at most two bands. If two bands are required, then they will be directly opposite one another, with opposite directions, and the slope of each band will be equal to $s$.
\end{lemma}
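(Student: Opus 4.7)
\emph{Plan.} My approach follows (\cite{bressaud}, Lemma 4.4). The key ingredients are the continuity of the trajectory, the fact that between bounces the particle's velocity is one of only four vectors $v_1=(1,s)$, $v_2=(1,-s)$, $v_3=-v_2$, $v_4=-v_1$, and a connectedness argument for the asymptotic direction of the trajectory. As a preliminary reduction, merge any parallel bands in the given finite cover so that the ray directions $\vec{u}_1, \ldots, \vec{u}_k$ are pairwise distinct; outside some disc $D_R$ the bands $B_i$ are then pairwise disjoint. Let $\alpha, \beta, \gamma, \delta$ denote the long-run time fractions spent in $v_1, v_2, v_3, v_4$ respectively.

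\emph{Monotone-escape case: at most one band.} Define $\vec{n}(t) := x(t)/|x(t)|$ wherever $|x(t)| > 0$, a continuous function of $t$. When $|x(t)|$ is large, the particle is within $O(1)$ of a wall in some band $B_i$, so $\vec{n}(t)$ lies within $O(\epsilon/|x(t)|)$ of $\vec{u}_i$. If $|x(t)| \to \infty$, the $\omega$-limit set of $\vec{n}(t)$ is closed, connected, and contained in the finite set $\{\vec{u}_1, \ldots, \vec{u}_k\}$, so it is a single $\vec{u}_i$; only one band is needed.

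\emph{Oscillating case: opposite bands along a slope-$s$ line.} Otherwise $|x(t)|$ oscillates and the particle undergoes infinitely many excursions, each beginning and ending in a bounded region. Over an excursion of duration $T_n$ with farthest extent $L_n$, the average velocity has magnitude $O(L_n/T_n) \to 0$; writing this average as $(\alpha_n+\beta_n-\gamma_n-\delta_n,\ s(\alpha_n-\beta_n+\gamma_n-\delta_n))$ and equating both components to $0$ forces $\alpha_n \to \delta_n$ and $\beta_n \to \gamma_n$. For the excursion to reach $L_n \to \infty$ the intermediate cumulative position must build up significantly in some direction, which can only come from an extended uninterrupted run in one of the four velocity directions $v_i$ — each of which has slope $\pm s$. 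So each excursion's far-reach lies near one of the two lines of slope $\pm s$ through the excursion basepoint. Together with the initial direction $v_1$ (slope $s$) and a further rigidity argument — that the ball cannot sustain excursions simultaneously on the slope-$s$ and slope-$(-s)$ lines, nor on more than one opposite pair along a given line — this yields exactly two opposite bands along a single slope-$s$ line.

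\emph{Main obstacle.} The hardest step is the rigidity argument in the oscillating case: showing that the excursion far-reaches cluster along a single slope-$s$ line rather than spreading across both slope-$\pm s$ lines or multiple parallel translates of such a line, and that the surviving bands are exactly opposite rays on that line. The Bressaud-Fournier argument handles the analogous rigidity in the brick-erasure model; I expect the adaptation to the 1-skeleton setting to require careful bookkeeping of the bouncing pattern within thin bands, tracking how the wall-erasure dynamics prevents the ball from sustaining multi-direction excursions over long times. The connectedness argument in the monotone-escape case and the velocity-averaging computation in the oscillating case are routine; the rigidity is the crux.
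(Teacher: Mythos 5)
There is a genuine gap, and you have located it yourself: the ``rigidity argument'' you defer is not a technical adaptation of Bressaud--Fournier bookkeeping but the entire content of the lemma, and the heuristic you offer for it is false. You claim that for an excursion to reach distance $L_n\to\infty$ the particle must take ``an extended uninterrupted run in one of the four velocity directions,'' each of slope $\pm s$. This is contradicted by the paper's own examples: a particle of slope $3$ digs a tunnel of slope $1$, and slope $146$ digs a horizontal tunnel, in both cases drifting arbitrarily far in a direction of slope $\neq \pm s$ through many short zigzags with no long straight run. So ``far reach'' does not force alignment with $\pm s$, and your oscillating-case analysis (the time-fraction bookkeeping $\alpha_n\to\delta_n$, $\beta_n\to\gamma_n$ included) cannot by itself pin the band directions to slope $s$ or to a single line.

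The mechanism the paper actually uses is specific to the erasure dynamics and is much more direct. If two bands are both visited infinitely often, the particle must pass from one to the other infinitely often through the bounded region (a disc of radius $r$) where the bands meet. Since walls, once erased, never return, that disc contains only finitely many walls, so only finitely many collisions can ever occur inside it; hence eventually every transit from band $B_1$ to band $B_2$ is a single collision-free straight segment. That segment has slope $\pm s$ (every leg of the trajectory is a reflection of the initial direction), and its endpoints lie ever deeper in the two bands as time goes on, so its direction converges simultaneously to $-\vec{u}_1$ and to $+\vec{u}_2$. This forces $\vec{u}_2=-\vec{u}_1$ with common slope $s$, and since every infinitely-visited band is antipodal to its predecessor, there can be at most two. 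Your monotone-escape connectedness argument is a fine (and different) way to handle the single-band case, but to close the proof you need the finitely-many-collisions-in-a-bounded-set observation rather than any claim about long straight runs.
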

\begin{proof}
Suppose the particle's path has to be contained in at least two bands, meaning that there's no point after which it sticks to one band. Then there must be some pair of bands $B_1,B_2$, with directions given by the rays $\ell_1, \ell_2$, such that the particle goes directly from band $B_1$ to band $B_2$ infinitely often, passing through the disc of radius $r$ on its way. As there are only finitely many walls inside the disc, eventually the particle has to go directly from $B_1$ to $B_2$ without changing direction. This means that the entrances from the disc to the parts of the tunnel associated with each band have to be directly opposite one another. It also means that the line segment between where the particle pauses work in $B_1$ and where it starts work in $B_2$ has to be some reflection of the ball's initial direction of movement; in particular, it has slope $s$. However, as time goes on, the particle's path from the leading edge of the $B_1$ tunnel back to the disc gets longer and longer, so the angle between the particle's exit direction and the direction of $\ell_1$ gets smaller and smaller; similarly, the angle between its entry to the $B_2$ tunnel and the direction of $\ell_2$ gets smaller and smaller.  Since the particle is heading in the same direction in both cases, $\ell_1$ and $\ell_2$ are forced to make an angle of $\pi$ radians when appropriately translated and must have a direction which is some reflection of the ball's. Since every band which gets entered infinitely often has an angle of $\pi$ with its predecessor, it is impossible to have more than two bands.
\end{proof}

Since tunnels are either unidirectional or bidirectional in directly opposite directions, it makes sense to talk about the singular direction of a tunnel. Once we start finding specific periodic tunnels, it'll be particularly convenient to talk about the slope of a tunnel, because the same periodic behavior can carry a particle (1,2), (1,-2), (-1,2), or (-1,-2), depending on whether the particle was headed up or down and left or right when it started its cycle. Those four displacements will be digging in four different directions, but the tunnels have the same band up to translation and reflection - and, for some of our results, up to translation and reflection will be the most precision we can achieve. We therefore say that:

\begin{definition}
\label{tunnelslopedef}
The \emph{slope} of a tunnel is the absolute value of the slope of the ray defining one of its bands.
\end{definition}

So, in our hypothetical, all four tunnels have slope 2, with one going up-right, one going up-left, one going down-right, and one going down-left. This does unfortunately mean that we're using "slope" to discuss a property of every starting state and a different property of (one category of) outcome. We have attempted to specify which is under discussion whenever it is not clear from context, and apologize for any remaining confusion.

An open question is whether there are any bidirectional tunnels besides the trivial ones and the particle-slope-1 tunnel. None have been found in simulation; however, related variants of the problem often see bidirectional tunnels not associated with particle slope 1 (see Section 10 for further details).

\begin{conjecture}
\label{bidirectionalconjecture}
The only bidirectional tunnels for the simplest bomb shape (where only the wall hit is erased) are those corresponding to an initial angle of movement which is an integer multiple of $\frac{\pi}{4}$.
\end{conjecture}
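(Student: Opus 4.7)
The plan is to combine Lemma \ref{twodirections} with a careful analysis of what must happen at the ``tip'' of each band. Suppose for contradiction that a bidirectional tunnel exists with slope $s \notin \{0, 1, \infty\}$. By Lemma \ref{twodirections}, the two bands point in directly opposite directions, both with slope $s$, and the particle must alternate between them, reversing direction at the tip of each band before transiting back through the central disc.

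First I would argue that after a finite time the motion near the disc simplifies. Any disc $D_r$ contains only finitely many walls, and walls are only erased, never restored, so there is some finite time $T$ after which no wall inside $D_r$ is ever hit again. After time $T$, the particle's trajectory between the two tips is a pair of opposing rays of slope $s$ passing through $D_r$ without bouncing. In particular, from time $T$ onward every bounce happens near one of the two tips, and each sojourn near a tip must reverse the full velocity vector; since horizontal bounces flip only the $y$-component and vertical bounces only the $x$-component, each reversal requires an odd number of bounces of each type.

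Next I would push for a periodicity statement at the tip. For the tunnel to extend indefinitely in both directions, the cleared region near each tip must eventually be invariant under translation by a vector $d$ parallel to the band: the sequence of walls destroyed in one reversal reappears on the next, shifted by $d$. Because $d$ must be parallel to $(1,s)$ and also connect grid-compatible tip configurations, one gets an immediate constraint that $d$ is a positive rational multiple of $(1,s)$ with $d \in \mathbb{Z}^2$, forcing $s$ to be rational; the irrational case would then be handled by contradiction.

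The main obstacle, and the reason this is only a conjecture, is ruling out rational $s \neq 0,1,\infty$. For $s=1$ the tip can be a trivial two-bounce corner-cleanup and any integer multiple of $(1,1)$ is grid-compatible, which is why that case works. For other rational slopes one would want to enumerate the finitely many tip configurations that (i) reverse the velocity with an odd number of each bounce type, (ii) erase precisely the walls needed to extend the band by the required displacement $d$, and (iii) leave behind a configuration that is a translate by $d$ of the one before. I expect this enumeration to explode combinatorially with the complexity of the tip, so the realistic attack is probably not pure case analysis but an invariant or potential function — perhaps built from the cutting-sequence machinery of Section 3, which linearizes bouncing motion on the torus — that distinguishes $s=1$ from all other rational slopes. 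Finding such an invariant is, I believe, the real content of the conjecture, and the reason the author offers only simulation evidence.
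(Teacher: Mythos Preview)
The paper does not prove this statement: it is stated as Conjecture~\ref{bidirectionalconjecture}, explicitly called ``an open question'' in the paragraph preceding it, and supported only by the remark that ``none have been found in simulation.'' There is therefore no paper proof to compare your proposal against. Your write-up is not really a proof either, and you say so yourself in the final paragraph; what you have is a plausible line of attack together with an honest accounting of where it stalls.

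A couple of remarks on the outline itself. The opening moves are sound: Lemma~\ref{twodirections} does force the two bands to be antiparallel with slope equal to the particle's slope, and the parity observation (reversing both velocity components requires an odd count of horizontal and an odd count of vertical bounces) is correct and potentially useful. The weak point is the step where you assert that ``the cleared region near each tip must eventually be invariant under translation by a vector $d$ parallel to the band.'' This is exactly the kind of periodicity the paper shows can \emph{fail} for unidirectional tunnels (Corollary~\ref{aperiodiconcestarted} builds aperiodic tunnels for irrational slopes), so you cannot simply assume it here; you would need a separate argument that bidirectionality forces eventual periodicity at the tip, and none is given. Without that step, the rationality-of-$s$ conclusion does not follow, and the rest of the sketch depends on it. Your closing assessment --- that the real content is an as-yet-undiscovered invariant distinguishing $s=1$ from other slopes --- is consistent with the paper's own stance that this is genuinely open.
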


\begin{centering}
\section{Cutting Sequences}
\end{centering}

Once the particle is moving, we care a great deal about which walls it hits, but less so about where on the wall it hits. In standard billiards, this manifests itself in the study of cutting sequences, which simply list the walls hit by a particle as it bounces around. We can make use of previous research into cutting sequences by looking at our particle's behavior in the right way. Our particle's tendendency to destroy walls makes using cutting sequences more complicated, but once we manage to connect them to this variant of the problem they will be a very convenient computational tool. This section is focused on doing so.

\begin{definition}
\label{encounterdef}
An \emph{encounter} is when the particle reaches a piece of the plane which was originally a wall (and may or may not still be a wall), or equivalently when the particle attains an integer $x-$ or $y-$ value. A \emph{collision} is when the particle bounces off a wall and erases the wall; we note that the collisions of a given particle are a strict subset of its encounters. Collisions and encounters will be labeled \emph{horizontal} or \emph{vertical} according to the orientation of the wall in question.
\end{definition}

Looking at the particle's progress in terms of encounters rather than collisions has the obvious disadvantage that sometimes we check in on the particle and find that it's sailing through open space not changing direction or erasing anything. That disadvantage is outweighed, however, by the fact that encounters behave in ways which have already been researched: 

\begin{lemma}
\label{cuttingsequence}
The sequence of encounters associated with a given starting location and slope, expressed as horizontal (which we'll often abbreviate to H) or vertical (V), is equivalent to the cutting sequence associated to the same starting location and slope on the infinite square grid.
\end{lemma}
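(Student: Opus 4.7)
The plan is to apply the standard billiards unfolding construction. At each collision I reflect the whole ambient plane (together with the remaining trajectory and the unit grid) across the wall just hit. Since horizontal and vertical reflections in integer lines are precisely symmetries of the unit grid, this procedure returns us to a plane carrying the same square grid; and since reflection across the hit wall negates the velocity component normal to that wall, it exactly cancels the physical reflection rule. Consequently the unfolded trajectory is a straight ray, based at the original starting point and of the same slope $s$ as the original.

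Next I would check that erased walls do not spoil this picture. When the particle sails through a wall that has already been erased, no physical reflection takes place and the unfolding inserts none either: on both sides of the crossing, the physical and the unfolded trajectories agree and continue straight. Since an encounter, by Definition \ref{encounterdef}, is simply the event of attaining an integer coordinate, regardless of whether a reflection occurs, the encounters of the physical particle correspond exactly to the integer grid-line crossings of the unfolded ray, in the same order.

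Finally I would verify that the H/V labels match. Reflection across any horizontal grid line sends horizontal grid lines to horizontal grid lines and vertical to vertical, and the same is true of reflection across vertical grid lines. By induction on the number of collisions unfolded, every horizontal encounter of the physical particle corresponds to a horizontal-line crossing of the unfolded ray, and likewise for vertical; the two sequences therefore agree letter by letter, and the unfolded sequence is by definition the cutting sequence of the ray on the infinite grid. I do not anticipate a real obstacle: the content of the lemma is essentially the observation that the erasure mechanism is invisible to encounter bookkeeping, so the proof reduces to writing the unfolding down carefully and checking that the H/V labeling is preserved at each step.
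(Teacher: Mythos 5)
Your proposal is correct and is essentially the same argument as the paper's: the paper likewise observes that each reflection is across a line $x=n$ or $y=n$, hence a symmetry of the grid preserving the horizontal/vertical labels of future encounters, so the encounter sequence matches that of the unreflected straight ray. You simply write the unfolding out more explicitly (including the harmless passage through erased walls), which the paper leaves implicit.
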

\begin{proof}
If the particle bounces off a wall, its new direction corresponds to reflecting the path given by its previous direction across a line of the form $x=n$ or $y=n$, $n$ some integer. Such reflections are symmetries of the original grid, and accordingly they map prospective horizontal encounters to horizontal encounters and vertical to vertical. The particle's sequence of encounters will therefore follow the same pattern as if it had been allowed to run forever without bouncing off anything.
\end{proof}

The infinite cutting sequence associated with a given slope is a Sturmian sequence with the spacing between "V"s directly related to the slope's continued fraction; for specific details of the relationship, see, e.g., \cite{cuttingsequence}.

\begin{remark}
\label{simulatorexplanation}
Since cutting sequences on the square torus can be readily derived from the partial fraction expansion of the slope, this correspondance makes computation much easier: for a given starting position and slope, it's easy to determine whether horizontal encounter $m$ comes before or after vertical encounter $n$, and in particular if the position and slope are both rational then the type of the next encounter can be determined by simple integer calculations. Every simulation used for pictures or to inspire conjectures in this paper was obtained from a simulator using such calculations to figure out the type of the next encounter, usually with starting position $(0,\frac{1}{2})$ or $(\frac{1}{2},\frac{1}{2})$. 
\end{remark}

Cutting sequences give us another way to define starting positions for a given bomb: take a Sturmian sequence, pick a step to start at, and continue with the appropriate sequence of encounters from there. Thinking about starting positions this way lets us determine which geometric starting positions are genuinely different - that is, which starting positions will eventually lead to different encounters and/or collisions. It's clear that different cutting sequences will eventually diverge, so different slopes always mean genuinely different starting conditions. With the same cutting sequence, though, the only possible difference is where in the sequence we start. Two different starting points which correspond to the same place in the cutting sequence will give the same result.

For irrational slopes, this just means that sliding the starting point forwards or backwards along a line with appropriate slope won't make a difference. The locations where a particle with irrational slope encounters horizontal (or vertical) walls are dense on the unit interval, so any movement not on that line must be large enough to change some eventual encounter. For rational slopes, however, the continued fraction expansion terminates, so the cutting sequence is periodic and we have only finitely many meaningfully distinct starting states. While moving the particle's starting position, we cannot change an encounter without crossing over a corner, so the boundaries between the distinct starting states must be the set of points from which the particle eventually hits a corner. This set is easy to find; just look at the line on the square torus starting at $(0,0)$ and carrying on to $(1,1)$ with appropriate slope.

\begin{figure}[h]
\begin{center}
\includegraphics[height=6 cm]{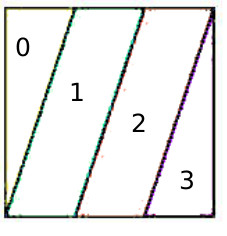}
\end{center}
\caption{\label{3startspic}
A particle with slope 3 has cutting sequence $\overline{HHHV}$, so will have four meaningfully distinct starting positions; they're labeled 0 through 3 with 0 being immediately after a vertical encounter, 1 after a single horizontal encounter, 2 after two horizontal encounters, and 3 after three vertical encounters.
}
\end{figure}

When talking in terms of cutting sequences, there isn't a well-defined path for the particle to travel, so the system effectively becomes a discrete dynamical system with the particle moving from one encounter to the next according to the cutting sequence. Since we can go back and forth between Sturmian sequences and directions of travel, we can move between the continuous version of the problem and the discrete version more or less at will, without changing which walls the particle hits at all - effectively we just cut all the states where the particle isn't on an integer x- or y- value out of the continuous version's state space. With a periodic cutting sequence it's even possible to describe the system as a cellular automaton, although we will omit the details since no benefit to doing so has been found.

\begin{centering}
\section{An Unbounded Set Of Slopes Giving Periodic Tunnels}
\end{centering}

We now have the tools we need to begin looking for tunnels. This section will start our search by constructing an infinite sequence of integer slopes all of which dig very simple periodic tunnels for at least one starting location. Once we understand how these simple tunnels work, our path to Theorem \ref{headlinethm} will involve finding variants of the easiest example. We will assume until section 9 that the bomb consists of only the single wall which the particle hits. \\

\begin{definition}
\label{freshdef}
In the next few sections, we will use \emph{column} to refer to the vertical strip $[n,n+1]\times \infty$, $n$ an integer, whose left and right borders are made up of vertical walls and which contains countably many horizontal walls inside. A \emph{fresh} column will be a column with all its horizontal walls still present and all the vertical walls on either side still present; when we talk about the particle entering a fresh column, we allow for one vertical hole which the particle passes through to enter the column.
\end{definition}

We begin by thinking about what a very simple periodic tunnel might look like once its periodic behavior had started. Assuming that the particle is headed broadly to the right, the simplest behavior imaginable would be for it to enter a fresh column from the left, bounce its way over to the right, bounce back to the left eventually hitting the left side of the column at a different wall than it entered by, and then bounce back over to the right and exit through the same wall it hit the first time. It will then be entering another fresh column; if it encounters walls in the same sequence, we get the same outcome, and so on unto eternity. To get the same sequence in every fresh column, we need a cutting sequence which is periodic with a period that contains either one or three vertical encounters; this corresponds to slopes of the form $s=n$ or $s=\frac{n}{3}$. The next step, then, is to find a slope of that description which also reacts appropriately to a fresh column.

The key to doing so is to notice that, as long as we're still in the same column, the sequence of horizontal walls encountered does not depend on when exactly the particle has its vertical encounters; vertical encounters do not change the height of the next horizontal encounter, and since we are staying in one column our previous horizontal encounters completely determine whether the next encounter is with a solid wall or not. We therefore calculate the sequence of horizontal encounters, with the idea of eventually dropping vertical encounters in at convenient points. When we do insert vertical encounters, they will come on the sides of whichever square the particle happens to be entering after its last horizontal encounter.

By reflection and translation, we suppose that the particle enters a fresh column by the left-hand-wall of the square $[0,1]\times[0,1]$, heading up and to the right. Throughout these calculations, squares and vertical walls will be labeled by the height given by their lower bound in standard coordinates.  After the particle's first horizontal encounter, it will be headed down and into square 0, as it collides with the horizontal wall at height 1. The second horizontal encounter will see it collide with the horizontal wall at height 0 and head up into square 0. The third horizontal encounter sees it pass through horizontal wall 1, heading up into square 1; the fourth sees it bounce off horizontal wall 2 and head down into square 1; the fifth sees it pass through wall 1 and head down into square 0.

Just by doing this much, we can see what will happen when a particle with slope $5/3$ enters a fresh column. It will hit a vertical wall after one or two horizontal encounters; in either case it is heading into square 0, so it hits the right-hand vertical wall at height 0. It hits its second vertical wall after three or four encounters, in either case the left-hand wall at height 1. Finally, it hits the third vertical wall after five encounters, heading down into square 0, so will pass through the wall it hit previously and head into another fresh column with its vertical direction flipped. Sure enough, in simulations with slope $5/3$, we see a periodic tunnel whose trail flips vertically with every column it passes through:

\begin{figure}[h]
\begin{center}
\includegraphics[height=5 cm]{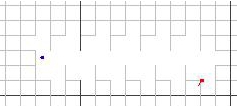}
\end{center}
\caption{\label{fivethirdspic}
A periodic tunnel produced by starting with slope $5/3$ from the left-hand side of the starting square. 
}
\end{figure}

Calculating further, we see that the third time the particle heads down into square 0 is after its thirteenth horizontal encounter; the number of encounters between heading down into square 0 has gone from four to eight, an increment of four. This turns out to represent a significant pattern:

\begin{lemma} 
\label{quadraticencounters}
Let $d(n)_k$ be the sequence of integers indexed by $k$ such that the $k$th time the particle heads down into square $n$ of a fresh column is after its $d(n)_k$th horizontal encounter; for instance, $d(0)_k$ = $(1,5,13,\ldots)$. Then there exist integers $b,c$ such that $d(n)_k = 2k^2 + bk + c$. The same holds for the sequence $u(n)_k$ giving the times when the particle heads up into square $n$.
\end{lemma}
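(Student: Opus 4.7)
The plan is to reduce the problem to a one-dimensional bouncing-ball analysis. Because the bomb erases only the wall hit and the particle remains in a fresh column whose vertical sides are still solid, the sequence of horizontal encounters depends only on the $y$-coordinate dynamics: at each horizontal encounter $y$ is an integer, and the vertical direction reverses precisely when the wall at that height is still solid. The encounter heights $h_1, h_2, \ldots$ are therefore determined purely by the fact that the particle enters the column moving up, independently of the slope.

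I would first establish the structure of this one-dimensional dynamics by induction. Starting from $y \in [0,1)$ moving up, the particle hits $h_1 = 1$ and bounces, then hits $h_2 = 0$ and bounces. Thereafter one shows inductively that after $m$ bounces the set of erased walls is a contiguous block $[L_m, U_m]$, and the particle is on its way to bounce off one of the adjacent solid walls at $L_m - 1$ or $U_m + 1$. The bounces therefore alternate top/bottom, occurring at heights $1, 0, 2, -1, 3, -2, 4, -3, \ldots$, and the erased block after $m$ bounces contains exactly $m$ integers. Between bounce $m - 1$ and bounce $m$ the particle must cross the entire block of size $m - 1$ and then strike the next solid wall; this contributes exactly $m - 1$ horizontal encounters (counting the bounce at $m$ but not at $m - 1$), so telescoping gives that bounce $m$ occurs at horizontal encounter $1 + \binom{m}{2}$.

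Next I would identify the horizontal encounters corresponding to heading down into square $n$, meaning $y = n + 1$ with downward velocity. The first such event is the top bounce at height $n + 1$, namely bounce $m = 2n + 1$. For $k \geq 2$, the $k$th such event is the pass-through of $y = n + 1$ during the descent following the top bounce at height $n + k$, which is bounce $m = 2(n + k) - 1$, and which reaches $y = n + 1$ exactly $k - 1$ encounters later. The two cases unify as
\[
d(n)_k = 1 + \binom{2(n+k) - 1}{2} + (k - 1).
\]
Setting $p = n + k$ and using $(2p - 1)(p - 1) = 2p^2 - 3p + 1$ yields $d(n)_k = 2k^2 + (4n - 2)k + (2n^2 - 3n + 1)$, the required quadratic with integer coefficients $b = 4n - 2$ and $c = 2n^2 - 3n + 1$. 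The analysis of $u(n)_k$ is symmetric, using bottom bounces and ascents in place of top bounces and descents.

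The main obstacle I anticipate is careful bookkeeping: one must verify inductively that the erased walls always form a contiguous block, so that the particle's bounces are confined to the two endpoints and never a solid wall in the middle, and then check that the pass-through formula also matches the initial-bounce case at $k = 1$. No deeper technique seems necessary.
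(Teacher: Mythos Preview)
Your argument is correct and in fact proves more than the lemma asks: you obtain explicit values $b=4n-2$, $c=2n^2-3n+1$, whereas the paper only establishes the quadratic form. The paper's proof is shorter and more conceptual: it notes that a sequence has the shape $2k^2+bk+c$ iff its second differences are identically $4$, and then argues directly that between consecutive visits ``down into square $n$'' the particle makes one full cycle (down to the lowest solid wall, back up through $n$, up to the highest solid wall, back down into $n$), during which exactly one new wall is cleared above and one below, so each of the four legs gains one encounter and the total gain is $4$. Your route instead enumerates the global bounce pattern $1,0,2,-1,3,-2,\ldots$, places the $m$th bounce at encounter $1+\binom{m}{2}$, and then identifies which descent contains the $k$th downward entry into square $n$. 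The paper's argument is uniform in $n$ (positive or negative) and avoids all computation; yours buys the explicit constants at the price of more bookkeeping and, as written, tacitly assumes $n\ge 0$, since the step ``the first such event is the top bounce at height $n+1$'' fails when $n+1\le 0$ (top bounces occur only at positive heights). That is harmless here because the subsequent corollary only uses $n=0,1$, but it is worth flagging. Likewise your ``symmetric'' treatment of $u(n)_k$ needs a word of care: the particle enters going up, so the very first horizontal encounter is a bounce rather than a pass-through, and one should check separately that the $k=1$ case of the unified formula still matches.
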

\begin{proof}
Showing that a sequence of integers is given by a quadratic function with lead term $2k^2$ is equivalent to showing that the difference between consecutive terms increases by $4$ every time. The difference between consecutive terms of $d(n)_k$ is the number of horizontal encounters which happen before the particle heads down into square $n$ again. What happens during that time is that the particle passes through all the cleared walls below square $n$ and hits the first solid wall beneath it; passes back up through all the previously cleared walls below square $n$ until it's heading up into square $n$; passes up through all the cleared walls above square $n$ until it hits the first solid wall; and passes back down through all the previously cleared walls above square $n$ until it comes back down into square $n$. Each time the particle does this, it clears a new wall above square $n$ and a new wall below, with the result that each stage of passing through all the previously cleared walls takes one more encounter for a total of exactly $4$ more encounters. The proof for $u(n)_k$ is analogous.
\end{proof}

This gives us the tools we need to calculate when the particle enters some specific rows:

\begin{corollary}
\label{niceslots}
Let $n$ be the number of horizontal encounters a particle has had in a fresh column which it entered heading upwards.
\begin{itemize}
\item If $n = 2x^2 - 2x + 1$ for some positive integer $x$, the particle is heading down into square 0.
\item If $n = 2x^2$, the particle is heading up into square 0.
\item If $n = 2x^2+1$, the particle is heading up into square 1.
\end{itemize}
\end{corollary}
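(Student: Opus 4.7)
The plan is to invoke Lemma \ref{quadraticencounters} to reduce the statement to a finite computation, and then pin down the free constants by reading off the first couple of terms of each sequence from the hand calculation that already precedes the lemma.

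First I would identify each bullet of the corollary with one of the sequences from the lemma: ``heading down into square $0$'' is $d(0)_x$, ``heading up into square $0$'' is $u(0)_x$, and ``heading up into square $1$'' is $u(1)_x$. Lemma \ref{quadraticencounters} tells us that each of these sequences has the form $2k^{2}+bk+c$ for some integers $b,c$ depending on the sequence. Since a quadratic with known leading coefficient $2$ is determined by any two of its values, it is enough to compute the first two terms of each sequence.

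Second I would read off those initial terms from the explicit bounce-by-bounce calculation in the paragraph immediately before Lemma \ref{quadraticencounters}. That calculation already records $d(0)_1=1$, $u(0)_1=2$, $u(1)_1=3$, and $d(0)_2=5$; continuing for a few more encounters (the particle drops down through the now-cleared wall at height $0$, bounces off the solid wall at height $-1$, and climbs back up) gives $u(0)_2=8$ and $u(1)_2=9$. Solving the three pairs of linear equations for $(b,c)$ then yields
\[
d(0)_x=2x^{2}-2x+1,\qquad u(0)_x=2x^{2},\qquad u(1)_x=2x^{2}+1,
\]
which is exactly the three bullets.

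The only real obstacle is bookkeeping: when extending the given computation past encounter $5$, one has to track which walls on both sides of the starting square are still solid, and it is easy to miscount. To keep this honest I would lean on the symmetry observation already used in the proof of Lemma \ref{quadraticencounters}, namely that each successive excursion above or below square $0$ passes through exactly one more previously cleared wall before hitting fresh solid material. This makes the encounter counts for $u(0)_2$ and $u(1)_2$ essentially forced, rather than something that has to be re-derived by eye.
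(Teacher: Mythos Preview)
Your proposal is correct and follows essentially the same approach as the paper: invoke Lemma~\ref{quadraticencounters} to reduce to determining two unknown constants per sequence, then fix those constants by computing the first two terms of each sequence from the explicit bounce-by-bounce calculation. The paper's proof is terser (it does not write out the values $u(0)_2=8$ and $u(1)_2=9$ explicitly), but the method is identical.
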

\begin{proof}
We know by Lemma \ref{quadraticencounters} that each of these cases is given by $n=2x^2+bx+c$ for some integer $b,c$. Using the convention that $x=1$ should give the first time each combination of square and direction happens, we only need to compute until the second time it happens to have a system of two equations and two unknowns which is easily solved.
\end{proof}

One immediate consequence of this is that if we have an integer slope $s$, it hits the left-hand side of the column after $2s$ horizontal encounters, so is guaranteed to hit a solid wall the first time unless $s=x^2$ for some s. It will also let us find slopes which create periodic tunnels as follows:

\begin{lemma}
\label{unboundedsequence}
Let the three integer sequences $s_n, x_n, y_n$ be given by $s_1 = 3, s_{n+1} = 6s_n^2-8s_n+3$ and $x_1 = 1, y_1 = 2, x_{n+1} = 2 x_n y_n, y_{n+1} = 6x_n^2+1$. Then for all $n$, $s_n = 2 x_n^2+1$ and $3*s_n = 2 y_n^2 + 1$.
\end{lemma}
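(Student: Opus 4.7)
I will proceed by straightforward induction on $n$, with the main observation being that the two hypotheses together force a Pell-type identity which cleanly drives the induction step.

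\textbf{Base case.} Check directly: $s_1 = 3 = 2(1)^2 + 1 = 2x_1^2 + 1$ and $3s_1 = 9 = 2(2)^2+1 = 2y_1^2 + 1$.

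\textbf{Key auxiliary identity.} Assuming $s_n = 2x_n^2 + 1$ and $3s_n = 2y_n^2 + 1$, I subtract three times the first from the second to eliminate $s_n$:
\begin{equation*}
0 = (2y_n^2 + 1) - 3(2x_n^2 + 1) + 2, \qquad \text{i.e.,} \qquad y_n^2 - 3x_n^2 = 1.
\end{equation*}
So the pairs $(x_n, y_n)$ are forced to be solutions of the Pell equation $y^2 - 3x^2 = 1$. This is the identity that makes the whole calculation work.

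\textbf{Induction step.} Substitute $s_n = 2x_n^2 + 1$ into the recursion and expand:
\begin{equation*}
s_{n+1} = 6(2x_n^2+1)^2 - 8(2x_n^2+1) + 3 = 24x_n^4 + 8x_n^2 + 1 = 8x_n^2(3x_n^2 + 1) + 1.
\end{equation*}
Using the Pell identity to replace $3x_n^2 + 1$ by $y_n^2$, this becomes $8x_n^2 y_n^2 + 1 = 2(2x_n y_n)^2 + 1 = 2x_{n+1}^2 + 1$, which is the first claim. For the second, compute
\begin{equation*}
3s_{n+1} = 72x_n^4 + 24x_n^2 + 3 = 2(36x_n^4 + 12x_n^2 + 1) + 1 = 2(6x_n^2 + 1)^2 + 1 = 2y_{n+1}^2 + 1,
\end{equation*}
as required.

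\textbf{Where the difficulty lies.} There isn't really any; the only nontrivial step is noticing that the two hypotheses together pin $(x_n, y_n)$ to the Pell curve $y^2 - 3x^2 = 1$, after which every substitution collapses. If I preferred, I could separate out a short lemma stating that the recursions $x_{n+1} = 2x_n y_n$, $y_{n+1} = 6x_n^2 + 1 = 2y_n^2 - 1$ preserve $y^2 - 3x^2 = 1$, and then feed that into the computation above. The algebraic recognition $6s_n^2 - 8s_n + 3 = 2(2x_n y_n)^2 + 1$ via the Pell relation is the whole content of the proof.
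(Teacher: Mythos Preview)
Your proof is correct and follows essentially the same route as the paper: both argue by induction, both implicitly or explicitly pass through the Pell relation $y_n^2 = 3x_n^2 + 1$ (the paper phrases its second step as $y_{n+1}^2 = 3x_{n+1}^2 + 1$, which itself requires the level-$n$ Pell identity to verify), and both reduce the recursion for $s_{n+1}$ to the stated square forms. Your write-up is simply more explicit than the paper's ``straightforward algebra,'' and your isolation of the Pell identity as the one genuine idea is exactly right.
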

\begin{proof}
For $n=1$ this is trivial, and it follows for larger $n$ by induction. Supposing it is true for $s_n$ then some straightforward algebra gives
\begin{align*}
2x_{n+1}^2+1 &=  s_{n+1}.\\
\end{align*}
It's also fairly easy to see that 
\begin{align*}
y_{n+1}^2 &= 3 x_{n+1}^2+1 ,
\end{align*}
which gives the desired result for $y_{n+1}$ when plugged into the $x_{n+1}$ equation.
\end{proof}

We are now ready to prove the existance of countably many distinct starting conditions producing periodic tunnels:

\begin{theorem}
\label{unboundedperiodic}
Again let $s_n$  be the sequence given by $s_1 = 3, s_{n+1} = 6s_n^2-8s_n+3$. Then starting with slope $s_n$ at any point in the interior of the triangle with vertices $(0,0), (0, 1), (\frac{1}{s_n}, 1)$, or at $(0,y)$ for $0<y<1$, produces a periodic tunnel up and to the right. 
\end{theorem}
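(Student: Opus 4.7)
The plan is to show that a particle starting at $(0,y_0)$ with $y_0 \in (0,1)$ and slope $s_n$, after $3s_n+3$ encounters, enters the next fresh column at $(1,1+y_0)$ with its original velocity, producing a periodic tunnel with displacement $(1,1)$. I first reduce the triangle case to this left-wall case: for a starting point $(x_0, y_0)$ in the triangular interior, the virtual point $(0, y_0 - s_n x_0)$ lies on the left wall with $y_0 - s_n x_0 \in (0,1)$, and the segment between the two points stays in the open unit square, so no encounter occurs on it and the real and virtual particles have identical future encounter sequences.

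For the left-wall case, Lemma \ref{unboundedsequence} tells us both $s_n = 2x_n^2+1$ and $3s_n = 2y_n^2+1$ have the form $2k^2+1$, so Corollary \ref{niceslots} places the particle at ``up into $1$'' (i.e., $y=1$ moving upward) at both its $s_n$-th and $3s_n$-th horizontal encounters. The cutting sequence $\overline{H^{s_n}V}$ puts the $s_n$-th H at $x = 1 - y_0/s_n$, so extending by velocity $(1,s_n)$ places the 1st V at $(1, 1+y_0)$. This wall is solid in the fresh column, so the particle bounces there and erases that V-wall segment.

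The crucial step is verifying the 2nd V also bounces rather than passing through the entry hole. A short trajectory computation from the $2s_n$-th H encounter gives the 2nd V position as $(0, y_{2V})$, where $y_{2V} = m + y_0$ if the $2s_n$-th H is recorded as ``up into $m$'' and $y_{2V} = m + 1 - y_0$ if ``down into $m$''. For $y_{2V}$ to land in the entry hole $(0,1)$ we would need $m=0$ in one of these cases: ``down into $0$'' requires $2s_n = 2k^2 - 2k + 1$, impossible by parity, while ``up into $0$'' requires $2s_n = 2k^2$, i.e., $s_n = k^2$. But $s_n \equiv 3 \pmod 5$ for all $n$: $s_1 = 3$, and $s_{n+1} = 6s_n^2 - 8s_n + 3 \equiv s_n^2 + 2s_n + 3 \equiv 9 + 6 + 3 \equiv 3 \pmod 5$ whenever $s_n \equiv 3$. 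Since squares mod $5$ lie in $\{0,1,4\}$, no $s_n$ is a perfect square. Hence the 2nd V bounces, and a parallel cutting-sequence calculation places the $3s_n$-th H at $x = 1 - y_0/s_n$ as well (the parity-dependent formulas for $\{y_{2V}\}$ and for the sign of $v_y$ after the 2nd V combine to give the same expression), so the 3rd V lands at $(1, 1+y_0)$. That wall was erased at the 1st V, so the particle passes through into the next fresh column with velocity $(1, s_n)$. Iterating gives the periodic tunnel.

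The main obstacle is the 2nd-V step: reducing the bounce requirement to the combinatorial condition that $s_n$ is not a perfect square, and then finding a clean modular invariant of the recursion to verify it. The mod-$5$ observation is the non-obvious part; once it is in place, everything else is routine cutting-sequence bookkeeping.
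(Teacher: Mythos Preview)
Your argument is correct and follows essentially the same path as the paper's proof: use Lemma~\ref{unboundedsequence} plus Corollary~\ref{niceslots} to pin down the first and third vertical encounters at height~$1$ on the far wall, check that the second vertical encounter cannot fall back through the entry hole, and conclude periodicity column by column. The paper handles the non-square condition with $s_n\equiv 3\pmod{10}$ rather than your $s_n\equiv 3\pmod{5}$, and it is somewhat terser about the second-$V$ step and the triangle reduction, but there is no substantive difference in strategy.
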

\begin{proof}
By Corollary \ref{niceslots} and Lemma \ref{unboundedsequence}, if $s_n$ is not a perfect square then whenever it enters a fresh column it bounces up and down until it hits the far vertical wall at height 1 (assuming that it enters at height 0 going up); bounces back across to hit at a solid wall; and bounces back once more to eventually pass through the far horizontal wall at height 1 with no other wall in its new column touched. 

\begin{figure}[h]
\begin{center}
\includegraphics[height=6 cm]{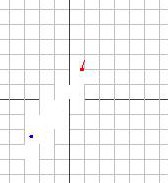}
\includegraphics[height=6 cm]{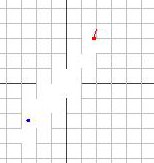}
\end{center}
\caption{\label{threepics}
The first slope in this set, $s_1 = 3$, produces the above tunnel with a period of six collisions; the image on the right shows what happens after advancing one complete period.
}
\end{figure}

If every column in the horizontal direction it's moving is fresh, it will keep doing this forever, producing a periodic tunnel. That condition is trivially true if it starts at the place in its cutting sequence just after the vertical encounter, as the first square is to all relevant purposes a fresh column and nothing it enters after that can possibly have been touched; the region given corresponds to that part of the cutting sequence. Starting with the vertical encounter also works to guarantee this, and will send the particle tunnelling up and to the left. 

The last thing we need to check is that $s_n$ isn't a perfect square. But if $s_n \equiv 3$ (mod 10), then so does  $s_{n+1}$, so by induction none of the $s_i$ are squares.
\end{proof}

Obviously these slopes will behave periodically for many other starting positions - all it needs is that every column they enter after a certain point should be untouched, at least at the heights they reach during the periodic step, or even just have no horizontal walls cleared and one particular vertical wall solid, which is a very likely condition once they start breaking into new parts of the plane. 

\begin{conjecture}
\label{alwaysperiodic}
For $s_n$ as above, any starting position which does not cause the particle to hit a corner will eventually produce a periodic tunnel.
\end{conjecture}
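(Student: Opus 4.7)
The plan is to show that any orbit which does not stop at a corner must, at some finite time, cross a vertical wall eastward into a column that has never been touched before; once this occurs the periodic tunnel of Theorem \ref{unboundedperiodic} takes over and sustains itself indefinitely.

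I would begin with an escape lemma: the orbit cannot eventually be confined to any region bounded in both $x$ and $y$, because such a region contains only finitely many walls, each collision erases a distinct wall, and (since the particle never hits a corner) there are infinitely many collisions. Next one needs the harder statement that, after a reflection if necessary, the orbit has unbounded $+x$-coordinate. A natural route uses the cutting sequence $\overline{H^{s_n}V}$ for integer slope $s_n$ (Lemma \ref{cuttingsequence}), which fixes the ratio of horizontal to vertical encounters at $s_n : 1$; combined with the geometry of bouncing in a finite-width vertical strip, this ought to rule out an orbit trapped in such a strip.

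Once unboundedness in $+x$ is established, I would pick the first collision at which the $x$-coordinate reaches a new maximum $M$ with the particle moving east across the line $x = M$. The column $[M, M+1] \times \mathbb{R}$ is then completely untouched, so it is fresh in the sense of Definition \ref{freshdef}, and the particle enters it via a vertical encounter, placing it at the ``just after $V$'' state of the cutting sequence. Since $s_n \equiv 3 \pmod{10}$ is not a perfect square (as in Lemma \ref{unboundedsequence}), the proof of Theorem \ref{unboundedperiodic} applies verbatim: the particle executes one full tunneling period in column $[M, M+1]$ and is delivered into column $[M+1, M+2]$ in the correct state, having touched only the entry vertical wall of that column. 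Every column strictly east of $M$ has never been visited by the pre-tunnel orbit, so each is fresh when the tunnel reaches it, and the periodicity propagates to infinity.

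The main obstacle is the upgrade from two-dimensional unboundedness to unboundedness in a single coordinate direction. The wall-counting escape lemma does not rule out an orbit whose $x$-coordinate stays bounded while its $y$-coordinate drifts to $+\infty$: a narrow vertical strip still contains infinitely many walls (one unit vertical wall at each integer height on each side), so the soft argument of the escape lemma does not close the door on such pathological orbits. Ruling them out seems to require a genuine analysis of bouncing dynamics in a strip together with wall erasure --- perhaps by showing that the induced symbolic orbit on the finitely many cutting-sequence states must eventually be periodic, which combined with the monotonic erasure of walls would yield a contradiction. This delicate step is presumably why the statement appears here as a conjecture rather than a theorem.
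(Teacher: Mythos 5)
This statement is stated in the paper as a conjecture with no proof (the paper only reports that it has been verified computationally for $s=3$), so there is nothing to compare your argument against; what matters is whether your sketch actually closes the question, and it does not. You correctly identify one gap yourself: the soft wall-counting argument only rules out confinement to a bounded region, and upgrading this to unboundedness of a single coordinate (ruling out an orbit trapped in a vertical strip of bounded width drifting off in $y$) is a genuine dynamical problem that the fixed $s_n\!:\!1$ ratio of $H$ to $V$ encounters does not obviously resolve, since that ratio is consistent with the particle ping-ponging between the two sides of a strip forever.

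There is, however, a second gap you do not flag, and it is just as serious. Even granting that the particle eventually crosses east into a column $[M,M+1]\times\mathbb{R}$ at a new maximum $x$-coordinate $M$, that column is \emph{not} fresh in the sense of Definition \ref{freshdef}. Maximality of $M$ protects the interior horizontal walls and the far boundary $x=M+1$, but the near boundary $x=M$ may have been struck from the west many times during the pre-tunnel wandering, leaving arbitrarily many holes in it rather than the single entry hole the definition permits. The periodic mechanism of Theorem \ref{unboundedperiodic} genuinely uses the integrity of that near wall: the second vertical collision of each period lands back on the entry-side boundary at a height determined by $2s_n$ horizontal encounters, and the whole point of the condition that $s_n$ is not a perfect square is that this landing spot is a \emph{solid} wall distinct from the entry hole. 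If the prior orbit happened to erase that particular segment of $x=M$, the particle sails back into column $[M-1,M]$ and the tunnel never starts. (The paper's remark following Theorem \ref{unboundedperiodic} makes exactly this point: what is needed is not merely an untouched column ahead but ``one particular vertical wall solid'' behind, and there is no obvious way to guarantee that from maximality of $M$ alone.) A related small imprecision: to enter the column moving east the particle must pass \emph{through} a previously created hole at $x=M$, not merely reach the line $x=M$, so the ``first new maximum'' event you select is a collision that bounces the particle back west, and the entry actually happens on a later return. So your outline reduces the conjecture to two open sub-problems rather than proving it, which is consistent with the statement remaining a conjecture in the paper.
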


However, there is no obvious way to prove that a given slope reaches the right conditions without checking each starting location by hand, and $s_n$ gets very large very fast - the sequence begins $(3, 33, 6273,\ldots)$. The conjecture has been checked for $s=3$. 

\section{Tunneling With Reorganization}

So far we've only looked at very simple slopes - integers and fractions with denominator 3. The problem is well-defined for any slope, and it's particularly interesting to look at what happens when the slope gets changed slightly. Intuitively, one might expect that small changes would snowball over time to make a large difference, especially small changes to a slope which was producing a periodic tunnel. However, in practice very different behavior can result, as we see in Figure \ref{reorghappens}. The main goal of this section will be to prove Lemma \ref{reorglemma}, which states the exact conditions under which a slope slightly bigger than 3 will behave as shown in Figure \ref{reorghappens}; this lemma will then be the basis for our proof that there exist uncountably many tunneling slopes for every possible starting point.

\begin{figure}[h]
\begin{center}
\includegraphics[height=6 cm]{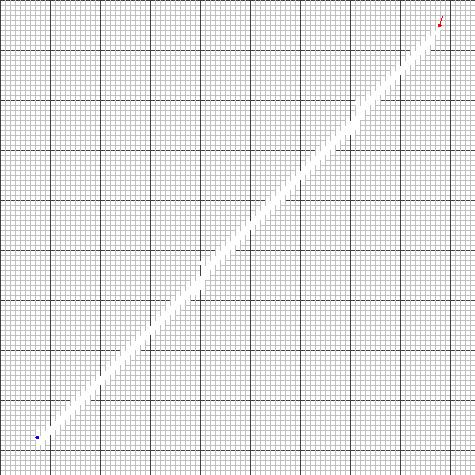}
\includegraphics[height=6 cm]{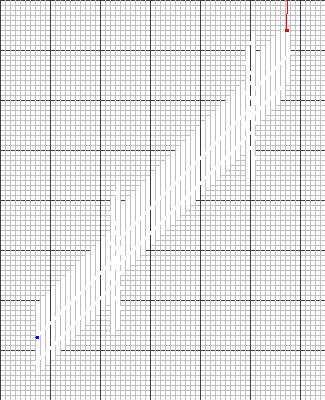}
\end{center}
\caption{\label{reorghappens}
The tunnels produced by slopes 3.01 and 33.01 respectively. These slopes spend a lot of time clearing the same patterns as 3 and 33, but pause every so often to reorganize.
}
\end{figure}

It's fairly easy to see why this might happen. A particle with slope three will have cutting sequence $\overline{HHHV}$, where $H$ corresponds to a horizontal encounter and $V$ to a vertical encounter. A particle with slope $3+\epsilon$ will have a cutting sequence of mostly $HHHV$, but with the occasional $HHHHV$. Given that slope 3 produces a periodic tunnel given some very simple starting conditions, it's not astonishing that 3.01 spends most of its time between instances of $HHHHV$ following the same pattern, and since we know from the partial fraction expansion of 3.01 that those instances are regularly spaced it is plausible that the resulting tunnel should turn out to be periodic. On the other hand, Figure \ref{reorgdoesnthappen} shows that it's not always true that close approximations of tunnelling slopes tunnel.

\begin{figure}[h]
\begin{center}
\includegraphics[height=6 cm]{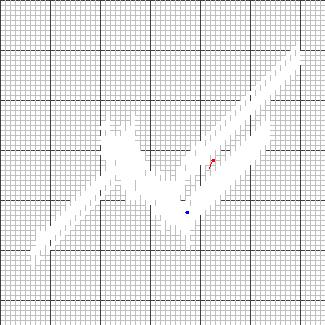}
\includegraphics[height=6 cm]{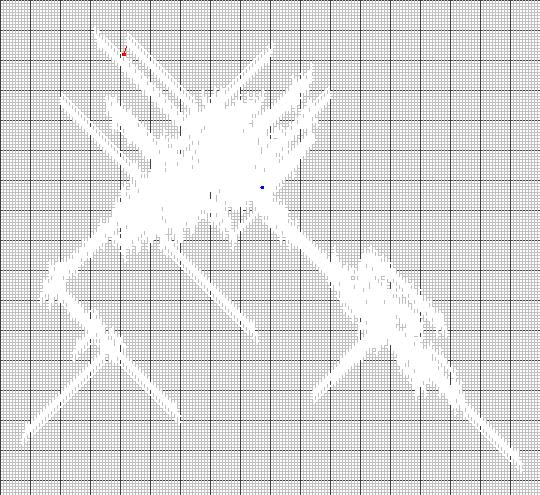}
\end{center}
\caption{\label{reorgdoesnthappen}
Slope 2.99, starting from the left of the square, after one and ten thousand collisions. No obvious overarching pattern.
}
\end{figure}

There does not seem to be an obvious way to tell which minor deviations from a periodic setup will still tunnel and which will become chaotic. What we can say is that the apparent tunnelling behavior of 3.01 is legitimate; the rest of this section will be devoted to articulating what is happening in that image, proving that it continues indefinitely, and finding the largest $\epsilon$ such that $3+\epsilon$ follows the same pattern. 

A quick and informal summary of what we see with slope 3.01 is that, first, when the particle enters a fresh column at a stage of its cutting sequence when it has enough $HHHV$s coming up, it reacts the same way as if it had slope 3, and second, when an $HHHHV$ chunk gets involved, the particle bounces around clearing a larger blob before eventually successfuly reorganizing itself and going back to the slope-3 tunnel for a while. It turns out that this reorganization is guaranteed to succeed as long as it comes after at least one column of slope-3 tunneling, and we can state exactly which slopes space out their $HHHHV$s widely enough for that to happen. 

\begin{lemma}
\label{reorglemma}
Suppose a particle has a cutting sequence corresponding to a slope in the interval $(3,3 \frac{1}{17}]$. Such cutting sequences can be split into chunks of the form $HHHV$ and $HHHHV$ by splitting so that each chunk ends with its unique vertical encounter. \begin{itemize}
\item If the particle enters a fresh column at a point in the cutting sequence where the next three chunks are all $HHHV$, it will pass through that column in the same way as a particle of slope 3. 
\item If it enters a fresh column with one of the next three chunks $HHHHV$ \textbf{and} the column it just left was cleared out in the slope-3 fashion \textbf{and} the next column in whichever direction it is moving is also fresh, the particle will pass through its current column and the next column and enter the column after next without hitting any walls in that column apart from the wall it enters through and with at least three $HHHV$ chunks to follow.
\end{itemize}
\end{lemma}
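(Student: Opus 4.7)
The approach is to trace the particle's trajectory explicitly through the relevant column(s), using Corollary \ref{niceslots} to determine the particle's square and direction after any number of horizontal encounters in a fresh column (independent of the precise slope), and the cutting-sequence hypothesis to determine when each vertical encounter occurs. Throughout, I track which walls are still solid and which have been erased, so that every encounter can be classified as either a collision (direction flips) or a pass-through (direction unchanged).

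For the first bullet, the H/V pattern $HHHV\,HHHV\,HHHV$ in a fresh column coincides exactly with the per-column pattern that slope 3 produces. Applying Corollary \ref{niceslots} at $n=1,2,3$ and extending by the reflection symmetry across each $V$ shows that the particle visits the same sequence of squares with the same directions as in the slope-3 case. A short check confirms that the first two $V$ encounters hit fresh vertical walls (in squares $1$ and $-1$ respectively) as collisions, while the third $V$ lands on the same wall as the first (the particle returns to the same height by reflection symmetry of the motion within the column), hence is a pass-through. The particle thus exits into the next column in the same state, relative to that column, as a slope-3 particle would.

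For the second bullet, I proceed by casework on which of the three chunks is $HHHHV$. The extra horizontal encounter causes the particle to bounce off one additional horizontal wall before the following $V$, shifting its height at that $V$ by one unit relative to the slope-3 trajectory. Using the explicit slope $s$, I compute the particle's height at each $V$ encounter and identify which are collisions and which are pass-throughs: the hypothesis that the previous column was slope-3-cleared tells me exactly which boundary walls are already erased, and the hypothesis that the next column is fresh lets me analyze the trajectory there by the same method. In each sub-case I trace the particle through the current column and then through the next column, and I verify that the exit into the column-after-next occurs through a single pass-through wall, with the particle's subsequent cutting-sequence position free of additional $HHHHV$ chunks for long enough.

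The main obstacle is bookkeeping: each sub-case gives slightly different heights at the $V$ encounters, and the argument succeeds only if those heights fall in the correct squares so that the expected sequence of collisions and pass-throughs occurs. The upper bound $s \leq 3 + \frac{1}{17}$ should emerge from the tightest of these geometric constraints across the sub-cases, taking the form of an explicit inequality whose boundary is $s = 3 + \frac{1}{17}$. The clause ``at least three $HHHV$ chunks to follow'' is comparatively easy: for $s \in (3, 3 + \frac{1}{17}]$ the continued fraction expansion of $s$ forces any two $HHHHV$ chunks in the cutting sequence to be separated by at least sixteen $HHHV$ chunks, so after a reorganization of only a few chunks, many $HHHV$ chunks still remain before the next $HHHHV$.
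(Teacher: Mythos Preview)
Your proposal has the right skeleton---casework on which of the three chunks is $HHHHV$, followed by explicit trajectory tracking---but two misconceptions would derail the execution.

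First, the analysis is purely combinatorial, not metric. Once the $H/V$ pattern of the cutting sequence is fixed, the sequence of walls encountered and their solid/erased status is completely determined (Lemma~\ref{cuttingsequence} and the discussion surrounding Corollary~\ref{niceslots}); the numerical value of $s$ never enters. Your plan to compute heights ``using the explicit slope $s$'' and to look for $s$-dependent ``geometric constraints'' whose tightest case is $s=3+\tfrac{1}{17}$ would not yield what you expect. The paper's proof simply tabulates, encounter by encounter, which wall is met and whether it is a collision or a pass-through; the three sub-cases differ only in the order of a pair of adjacent encounters and reconverge to an identical configuration after thirteen encounters.

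Second, and more importantly, you have the origin of the bound $\tfrac{1}{17}$ backwards. It does not emerge from any inequality in the sub-case analysis, which is slope-independent. After the three cases reconverge, the particle still needs eleven further $HHHV$ chunks before it exits into a genuinely fresh column---and during those eleven chunks it re-enters the \emph{previous} column, which is why the slope-3-cleared hypothesis on that column is essential (for its interior horizontal walls, not merely its boundary). So the reorganization is not ``only a few chunks'' long: it consumes up to $2$ $HHHV$s before the $HHHHV$, the $HHHHV$ itself, and $11$ $HHHV$s after; then $3$ more $HHHV$s are required so that the lemma applies again in the next column. Requiring at least $16$ $HHHV$s between consecutive $HHHHV$s is exactly the condition $s\le 3+\tfrac{1}{17}$. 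The ``at least three $HHHV$ chunks to follow'' clause is therefore not a corollary of the bound---it \emph{is} the bound.
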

\begin{proof}
The proof of part 1 is simple: three $HHHV$ chunks immediately after entering a fresh column is what slope 3 uses to do one column's worth of tunneling, so any identical cutting sequence will do the same. The proof of part 2 boils down to tracking by hand what happens if we enter a fresh column with an $HHHHV$ chunk line up. 

 \begin{figure}[h]
\begin{center}
\includegraphics[height=7 cm]{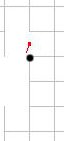}
\end{center}
\caption{\label{setuppic}
A particle entering a fresh column after clearing the previous column in the slope-3 fashion
}
\end{figure}

We begin in the situation depicted by Figure \ref{setuppic}. Throughout this section, walls will be labeled by their orientation (horizontal or vertical) and their lower or left endpoint. We also translate and reflect the plane so that the particle starts out moving up and right into the square $[0,1] \times [0,1]$. This means that the origin is the point marked with a black dot in the picture, the particle is passing through the wall (0,0)V, and the walls cleared in the previous column were (-1,-2)V, (-1,-1)V and the horizontal walls from (-1,-2)H through (-1,1)H.

We now split into three separate cases, depending on whether the $HHHHV$ in the cutting sequence is the first, second, or third chunk.Tables \ref{case1} through \ref{case3} show how the next three chunks play out in each case.

\begin{table}[h]
\begin{tabular}{lllll}
Encounter & Type & Collides With & Passes Through & Direction Afterwards \\
1         & H    & (0,1)H        &                & DR                   \\
2         & H    & (0,0)H        &                & UR                   \\
3         & H    &               & (0,1)H         & UR                   \\
4         & H    & (0,2)H        &                & DR                   \\
5         & V    & (1,1)V        &                & DL                   \\
6         & H    &               & (0,1)H         & DL                   \\
7         & H    &               & (0,0)H         & DL                   \\
8         & H    & (0,-1)H       &                & UL                   \\
9         & V    & (0,-1)V       &                & UR                   \\
10        & H    &               & (0,0)H         & UR                   \\
11        & H    &               & (0,1)H         & UR                   \\
12        & H    &               & (0,2)H         & UR                   \\
13        & V    & (1,2)V        &                & UL                  
\end{tabular}
\caption{\label{case1}
The case where the first chunk is $HHHHV$.
}
\end{table}

\begin{table}[h]
\begin{tabular}{lllll}
Encounter & Type & Collides With & Passes Through & Direction Afterwards \\
1         & H    & (0,1)H        &                & DR                   \\
2         & H    & (0,0)H        &                & UR                   \\
3         & H    &               & (0,1)H         & UR                   \\
4         & V    & (1,1)V        &                & UL                   \\
5         & H    & (0,2)H        &                & DL                   \\
6         & H    &               & (0,1)H         & DL                   \\
7         & H    &               & (0,0)H         & DL                   \\
8         & H    & (0,-1)H       &                & UL                   \\
9         & V    & (0,-1)V       &                & UR                   \\
10        & H    &               & (0,0)H         & UR                   \\
11        & H    &               & (0,1)H         & UR                   \\
12        & H    &               & (0,2)H         & UR                   \\
13        & V    & (1,2)V        &                & UL                  \\
\end{tabular}
\caption{\label{case2}
The case where the second chunk is $HHHHV$. Note that the only difference is swapping the order of encounters 4 and 5 and the direction the particle moves between them.
}
\end{table}

\begin{table}[h]
\begin{tabular}{lllll}
Encounter & Type & Collides With & Passes Through & Direction Afterwards \\
1         & H    & (0,1)H        &                & DR                   \\
2         & H    & (0,0)H        &                & UR                   \\
3         & H    &               & (0,1)H         & UR                   \\
4         & V    & (1,1)V        &                & UL                   \\
5         & H    & (0,2)H        &                & DL                   \\
6         & H    &               & (0,1)H         & DL                   \\
7         & H    &               & (0,0)H         & DL                   \\
8         & V    & (0,-1)V       &                & DR                   \\
9         & H    & (0,-1)H       &                & UR                   \\
10        & H    &               & (0,0)H         & UR                   \\
11        & H    &               & (0,1)H         & UR                   \\
12        & H    &               & (0,2)H         & UR                   \\
13        & V    & (1,2)V        &                & UL                  
\end{tabular}
\caption{\label{case3}
The case where the third chunk is $HHHHV$. The only difference from the second case is swapping the order of encounters 8 and 9 and the direction of motion between them
}
\end{table}

It turns out that all three cases play out very similarly, with just a couple of changes to the order in which particular walls are erased. At the end of the third chunk, all three are in the position shown in Figure \ref{caseconvergence}. The vertical walls erased are  (-1,-2)V, (-1,-1)V, (0,0)V, (0,1)V, (1,1)V, and (1,2)V. The horizontal walls erased are from (-1,-2)H through (-1,1)H and from (0,-1)H through (0,2)H. We now investigate what happens with repeated $HHHV$s from that position; when the particle passes through multiple erased horizontal walls, we combine into one line to save space.

 \begin{figure}[h]
\begin{center}
\includegraphics[height=7 cm]{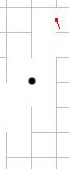}
\end{center}
\caption{\label{caseconvergence}
The state of the particle and its surroundings when the three different cases reconverge.
}
\end{figure}

\begin{table}[h]
\begin{tabular}{lllll}
Encounter & Type & Collides With & Passes Through     & Direction Afterwards \\
1         & H    & (0,3)H        &                    & DL                   \\
2-3       & HH   &               & (0,2)H, (0,1)H     & DL                   \\
4         & V    &               & (0,0)V             & DL                   \\
5-7       & HHH  &               & (-1,0)-(-1,-2)H    & DL                   \\
8         & V    & (-1,-3)V      &                    & DR                   \\
9         & H    & (-1,-3)H      &                    & UR                   \\
10-11     & HH   &               & (-1,-2)H, (-1,-1)H & UR                   \\
12        & V    &               & (0,-1)V            & UR                   \\
13-15     & HHH  &               & (0,0)-(0,2)H       & UR                   \\
16        & V    &               & (1,2)V             & UR                   \\
17        & H    & (1,3)H        &                    & DR                   \\
18        & H    & (1,2)H        &                    & UR                   \\
19        & H    &               & (1,3)H             & UR                   \\
20        & V    & (2,3)V        &                    & UL                   \\
21        & H    & (1,4)H        &                    & DL                   \\
22-23     & HH   &               & (1,3)H, (1,2)H     & DL                   \\
24        & V    &               & (1,1)V             & DL                   \\
25-27     & HHH  &               & (0,1)-(0,-1)H      & DL                   \\
28        & V    & (0,-2)V       &                    & DR                   \\
29        & H    & (0,-2)H       &                    & UR                   \\
30-31     & HH   &               & (0,-1)H, (0,0)H    & UR                   \\
32        & V    & (1,0)V        &                    & UL                   \\
33-35     & HHH  &               & (0,1)H-(0,3)H      & UL                   \\
36        & V    & (0,3)V        &                    & UR                   \\
37        & H    & (0,4)H        &                    & DR                   \\
38-39     & HH   &               & (0,3)H, (0,2)H     & DR                   \\
40        & V    &               & (1,1)V             & DR                   \\
41        & H    & (1,1)H        &                    & UR                   \\
42-43     & HH   &               & (1,2)H, (1,3)H     & UR                   \\
44        & V    &               & (2,3)V             &                     
\end{tabular}
\caption{\label{maintable}
The results of chaining together eleven $HHHV$ chunks from the position in Figure \ref{caseconvergence}.
}
\end{table}

Tracking the particle through the encounters in Table \ref{maintable} leaves it entering a column which, as far as we've tracked, is fresh; the only wall in the column $2 \leq x \leq 3$ which has been erased is (2,3)V, through which the particle is entering the column. We're therefore set as long as the particle has at least three $HHHV$ chunks lined up before the next $HHHHV$. But this just requires the $HHHHV$s to be spaced sufficiently far apart. 

We had at most 2 $HHHV$ chunks between the $HHHHV$ and the start of Table \ref{maintable}, and went through 11 chunks in the table; 2+11+3 = 16, so the lemma holds for any slope with at least 16 $HHHV$s between $HHHHV$s. This is equivalent to saying that if the cutting sequence is written as $(HHHV)^{a_1} H (HHHV)^{a_2} \ldots$, all the $a_i \geq 17$, which in turn is equivalent to saying that the continued fraction expansion of the slope starts
\begin{equation*}
s = 3 + \frac{1}{n + \ldots}
\end{equation*}
for some $n \geq 17$; this is obviously true iff $s \in (3, 3+\frac{1}{17}]$. 

To see that 17 is a strict lower bound on $n$ for the reorganization pattern to work, note that we need at least 14 $HHHV$ chunks to get through Table \ref{maintable} and one standard 3-style column, and if we have only 14 or 15 $HHHV$ chunks then the next $HHHHV$ chunk will happen earlier in its column; eventually, the $HHHHV$ chunk is the first chunk in its column and we have to have 16 $HHHV$ chunks to keep the process going.
\end{proof}

\begin{corollary}
\label{periodiconcestarted}
Suppose a particle with rational slope $s \in (3, 3+\frac{1}{17}]$ begins the pattern of behavior described in Lemma \ref{reorglemma}. Then unless it hits a corner or runs into a part of the plane with previously-cleared walls, it will dig a periodic tunnel with slope $1 + \frac{3s-9}{25-8s}$.
\end{corollary}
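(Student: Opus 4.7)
The plan is to iterate Lemma \ref{reorglemma} using the rationality of $s$ to force an exact repetition, and then read off the tunnel slope as an average displacement. Write $s = p/q$ in lowest terms, so the cutting sequence is periodic with exactly $p$ horizontal and $q$ vertical encounters per period. Splitting into chunks gives $a := 4q - p$ chunks of type $HHHV$ and $b := p - 3q$ chunks of type $HHHHV$ per period; the hypothesis $s \in (3, 3+\tfrac{1}{17}]$ is precisely the condition that at least $16$ $HHHV$s separate any two $HHHHV$s, so the spacing assumption of Lemma \ref{reorglemma} holds throughout.

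The first step is to establish that the pattern repeats indefinitely. Since by hypothesis the reorganization pattern has already started, the particle enters the first relevant fresh column in the configuration described at the start of Lemma \ref{reorglemma}. I would apply the lemma inductively: part 1 handles any three-chunk window of pure $HHHV$s, and part 2 handles any window containing an $HHHHV$. Either way, the no-corner and no-old-walls hypotheses let us conclude that the post-block column is again fresh, so the hypothesis for the next invocation is restored. The outcome of each block depends only on the particle's direction and its position in the cutting sequence; since the cutting sequence is periodic of period $p+q$ and there are only finitely many distinct configurations, the particle eventually returns to an identical configuration (up to translation), producing a periodic tunnel.

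The second step is to compute the slope by decomposing the chunks of a cutting sequence period into $b$ \emph{reorganization blocks} and $(a-13b)/3$ \emph{pure slope-3 columns}. From Cases 1--3 and Table \ref{maintable} of Lemma \ref{reorglemma}, each reorganization block consists of $14$ chunks (one $HHHHV$ plus the $13$ flanking $HHHV$s) and moves the particle by $(2,3)$, carrying it from the state ``passing through $(0,0)V$ up-right into a fresh column'' to the same kind of state at $(2,3)V$. From the proof of Theorem \ref{unboundedperiodic}, each pure slope-3 column consists of $3$ chunks and moves the particle by $(1,1)$. Summing over one cutting sequence period gives net displacement
\begin{equation*}
\Delta \;=\; \left( \frac{a - 7b}{3}, \; \frac{a - 4b}{3} \right),
\end{equation*}
and substituting $a = 4q - p$, $b = p - 3q$ gives tunnel slope
\begin{equation*}
\frac{a - 4b}{a - 7b} \;=\; \frac{16q - 5p}{25q - 8p} \;=\; \frac{16 - 5s}{25 - 8s} \;=\; 1 + \frac{3s - 9}{25 - 8s}.
\end{equation*}

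The main obstacle is the bookkeeping for the reorganization block: I need to confirm carefully that the $14$-chunk packaging I pull out (one $HHHHV$ together with its $13$ flanking $HHHV$s, ending in the $(2,3)V$ state) really is what the particle traces, and that consecutive blocks do not interleave --- both of which are ensured by the $16$-chunk spacing between $HHHHV$s. A minor secondary subtlety is the divisibility condition $3 \mid (a - 13b)$; if it fails, I would run the argument over three cutting-sequence periods to get integer block counts, which does not affect the slope computation since numerator and denominator scale identically.
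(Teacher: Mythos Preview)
Your proposal is correct and follows essentially the same route as the paper: decompose one cutting-sequence period into reorganization blocks (one $HHHHV$ plus thirteen $HHHV$s, displacement $(2,3)$) and pure slope-$3$ columns (three $HHHV$s, displacement $(1,1)$), sum the displacements, and handle the divisibility-by-$3$ issue by tripling the period. The only difference is cosmetic---you parametrize by $s=p/q$ and count chunks directly as $a=4q-p$, $b=p-3q$, whereas the paper writes $s=3+a/b$ and counts $a$ $HHHHV$s and $b-a$ $HHHV$s---but the arithmetic and the final slope computation are identical.
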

\begin{proof}
As long as we don't run into any corners or pre-cleared walls, the behavior described in Lemma \ref{reorglemma} will continue indefinitely. Rational slopes have periodic cutting sequences, so we can use our knowledge of that behavior to calculate the exact displacement per pass through the cutting sequence. As in the proof of Lemma \ref{reorglemma}, we reflect the plane so that the particle is moving up and to the right for the following calculations; this will not affect whether it tunnels or the slope of such a tunnel. Specifically, suppose $s = 3 + \frac{a}{b}$, where $a$ and $b$ are (not necessarily relatively prime) integers. Then there's a periodic cutting sequence with $3b+a$ $H$s and $b$ $V$s corresponding to slope $s$. 

Since every chunk is either $HHHHV$ or $HHHV$, noting that we have one $V$ either way and only the $H$s we must have $a$ HHHHV chunks and $b-a$ HHHV chunks. One $HHHHV$ chunk plus thirteen $HHHV$ chunks translate the particle by (2,3), per the calculations in the proof of Lemma \ref{reorglemma} (eleven $HHHV$s in Table \ref{maintable}, one $HHHHV$ and two $HHHV$ in the setup case). Three $HHHV$ chunks translate by (1,1), as we know from our study of slope 3. So our total translation is $a(2,3) + \frac{b-14a}{3}(1,1)$. If $b-14a$ isn't divisible by 3, this is going to spit out a fraction; that corresponds to a pass through the periodic cutting sequence leaving us with a different number of $HHHV$s done and therefore ready to start the next reorganization in a different subcase. Looping through the sequence three times (multiplying $a$ and $b$ by 3) will see our collisions behaving properly periodically, so we can afford to ignore this possibility.

Assuming $b-14a$ divisible by 3, we therefore have displacement $(2a +\frac{b-14a}{3}, 3a +\frac{b-14a}{3})$. Consistent displacement after a given number of collisions makes this a periodic tunnel, and we can use the displacement to see that the slope is
\begin{equation*}
\frac{b-5a}{b-8a} = \frac{1-5a/b}{1-8a/b} = \frac{1-5(s-3)}{1-8(s-3)} = 1 + \frac{3s-9}{25-8s}.
\end{equation*}
Note that this is a defined, continuous, and increasing function on the interval we're interested in.
\end{proof}

The analogous result for irrational slopes and proving that such particles do get started require a bit more work, which will be done in the next section.
 
Slope 3 is not the only place near which we see this kind of imitation-with-reorganization. As we saw in Figure \ref{reorghappens}, the same happens for slopes slightly larger than 33 (and, simulations suggest, for slopes slightly smaller). We also see similar behavior either side of $\frac{5}{3}$.

The neighborhood of slope 3 was was chosen for this detailed investigation over the others mentioned primarily because its tunnel, cutting sequence, and reorganization period are all very simple, but there are a couple of particularly interesting things about it. First is the fact that, per Figure \ref{reorgdoesnthappen}, the imitation-with-reorganization happens on one side of 3 but not the other. Secondly, stepping just past the  $3 \frac{1}{17}$ boundary for that behavior still seems to result in (almost-)periodic tunnels - see Figure \ref{oddperiodic}.

 \begin{figure}[h]
\begin{center}
\includegraphics[height=6 cm]{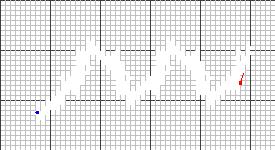}
\includegraphics[height=5 cm]{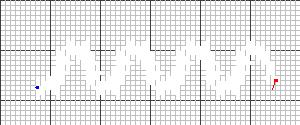}
\end{center}
\caption{\label{oddperiodic}
Starting in the center with slopes $3 \frac{1}{16}$ and  $3 \frac{1}{14}$. Both produce similar periodic tunnels, but not on the same model as slope 3 or $3+\epsilon$.
}
\end{figure}

These tunnels are harder to understand than the 3-with-reorganization tunnels, and the particle seems to stop making them somewhere between $3 \frac{1}{14}$ and $3 \frac{3}{40}$. Running simulations to approximate the exact transition point does not produce a nice obvious candidate.

\section{Continuity And Uncountable Tunnelling Slopes}

Although Lemma \ref{reorglemma} showed that rational slopes slightly bigger than 3 tunnel with reorganization once they get started, we still haven't quite shown that they necessarily start tunneling in the first place, or explained what happens to irrational slopes. This section will be devoted to fixing both holes and ultimately proving that for every starting point there is an interval on which every slope tunnels or hits a corner. We'll accomplish this by way of a more general discussion of the relationship between close starting conditions. Intuitively it seems obvious that close starting conditions should agree for a while, and agree more closely the closer the starting conditions; in fact, it's easy to construct an argument showing that this happens. 

\begin{lemma}
\label{continuitylemma}
Let P be a point in the interior of the unit square, $s$ be a real number, and $n$ be any integer such that a particle starting from P with slope $s$ does not hit a corner in the first $n$ collisions. Then there is a neighborhood $U$ of P such that starting a particle with slope $s$ anywhere in U results in the same first $n$ collisions, and a neighborhood $V$ of $s$ such that starting from P with any slope in V results in the same first $n$ collisions.
\end{lemma}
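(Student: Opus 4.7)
The plan is to argue by induction on the collision index, exploiting the fact that the only way small perturbations of a straight-line trajectory can change which wall it hits first is by pushing the trajectory across a grid corner.

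First I would set up quantitative margins. For each $i \in \{1, \ldots, n\}$, let $p_i$ be the point where the $i$th collision of the unperturbed trajectory occurs, lying in the interior of some horizontal or vertical unit-length wall $w_i$. By hypothesis $p_i$ is not a corner, so let $\delta_i > 0$ be the distance from $p_i$ to the nearer endpoint of $w_i$, and set $\delta = \tfrac{1}{2} \min_{i \leq n} \delta_i$. I also fix $L$, an upper bound on the total path length traveled in the first $n$ collisions of the unperturbed trajectory.

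Next I would carry out the inductive step. Suppose the perturbed particle (with starting point $P'$ close to $P$ and slope $s'$ close to $s$) has executed collisions $1, \ldots, i-1$ agreeing with the unperturbed one, and lies at a point $q_{i-1}$ within distance $\eta_{i-1}$ of $p_{i-1}$ with direction within angle $\eta_{i-1}$ of the original post-collision direction. The next straight-line segment has length at most roughly $L$, so the point at which this segment first meets the grid lies within $O(\eta_{i-1}(1+L))$ of $p_i$. Provided this bound is less than $\delta$, the perturbed particle crosses the same grid segment $w_i$ away from its corners, so its $i$th collision is with the same wall $w_i$ (the wall is still solid because, by the inductive hypothesis, the history of erasures matches). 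Horizontal/vertical reflection is an isometry, so after the collision the perturbed direction is still within $\eta_{i-1}$ of the original, and the collision point is within $O(\eta_{i-1}(1+L))$ of $p_i$. Set $\eta_i = C \eta_{i-1}$ for a suitable constant $C$ depending on $L$.

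Finally I would unwind the induction. Choosing the initial discrepancy $\eta_0$ small enough that $C^n \eta_0 < \delta / (1+L)$ guarantees the estimate at every stage $i \leq n$. Both a change in starting point (with fixed slope) and a change in slope (with fixed starting point) translate into an initial discrepancy $\eta_0$ which goes to $0$ as the perturbation goes to $0$, so there exist neighborhoods $U$ of $P$ and $V$ of $s$ of the required form.

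The only real obstacle is bookkeeping: one must check that the estimate controlling the next collision point really is linear in $\eta_{i-1}$ (so that the iterated bound $C^n \eta_0$ is finite), and that the erasure history is preserved, which follows automatically once one knows the same walls are being struck in the same order. Both points are routine given that we stay a definite distance $\delta$ away from every corner throughout the first $n$ collisions.
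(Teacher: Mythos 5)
Your argument is essentially the paper's proof made quantitative: the paper simply observes that reflections are isometries of the grid, so two nearby trajectories agree until their \emph{unfolded} straight-line paths would cross different walls, and straight lines depend continuously on starting point and slope; your collision-by-collision induction with error propagation is the same idea spelled out (your geometric bound $C^n\eta_0$ is more pessimistic than the linear-in-path-length bound the unfolding gives, but harmless for fixed $n$). One bookkeeping slip worth fixing: you define the corner margins $\delta_i$ only at the $n$ collision points, but between consecutive collisions the unperturbed particle may pass through several already-erased walls, and if one of those crossings is near a lattice point the perturbed particle can slip onto the adjacent, still-solid wall and collide there instead of reaching $w_i$. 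The remedy is to take $\delta$ as the minimum distance to a corner over \emph{all} encounters (grid crossings) in the first $n$ collisions, which is legitimate because the hypothesis that the particle hits no corner means every such crossing is in the interior of a wall; with that change the induction goes through as you describe.
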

\begin{proof}
If the particle kept running in the same direction after colliding with a wall, rather than reflecting off, the result would be obvious, since moving the starting point a small amount or changing the direction a little can only have a tiny effect, and a small enough effect won't move any of the encounters corresponding to the first $n$ collisions. But if two particles both reflect off the same wall, the reflection doesn't change whether their next collisions agree, so even with reflection we still have particles agreeing until they've travelled far enough that they would encounter different walls by going in a straight line.
\end{proof}

The lemma is phrased in terms of varying P and $s$ separately because that makes it easy to derive a statement about the boundary of the optimal U and V; it's also easy to find a version which deals with varying P and $s$ simultaneously.

\begin{corollary}
\label{boundarycorners}
For $P$, $s$, and $n$ as above, and taking U and V to be the largest possible neighborhoods given by Lemma \ref{continuitylemma}, a particle starting from any point $Q \in \partial U$ with slope $s$ will hit a corner in the first $n$ collisions. Similarly, a particle starting from P with a slope $s' \in \partial V$ will hit a corner in the first $n$ collisions. 
\end{corollary}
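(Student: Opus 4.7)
The plan is to argue by contradiction from the maximality of $U$ (and of $V$). First, I would make precise what "largest neighborhood" means: let $U$ be the union of all open neighborhoods $U'$ of $P$ such that every particle starting in $U'$ with slope $s$ has the same first $n$ collisions as the particle starting from $P$. As a union of open sets all sharing this property, $U$ is itself open and still has the property, so it is the unique largest such neighborhood; Lemma \ref{continuitylemma} guarantees it is nonempty. Define $V$ analogously in the slope variable, as the largest open interval of slopes around $s$ with the corresponding property.

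Now suppose $Q \in \partial U$ and, for contradiction, that the trajectory from $Q$ with slope $s$ completes at least $n$ collisions, none of which is a corner. Applying Lemma \ref{continuitylemma} to $Q$ produces an open neighborhood $W$ of $Q$ on which every starting point yields the same first $n$ collisions as $Q$. Because $Q \in \partial U$ and $W$ is open, $W \cap U$ is nonempty, and any point in that intersection has its first $n$ collisions agreeing with both those from $Q$ (since it lies in $W$) and those from $P$ (since it lies in $U$). Hence $P$ and $Q$ already have identical first $n$ collisions.

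But then $U \cup W$ is an open neighborhood of $P$ on which every trajectory shares its first $n$ collisions with $P$, so maximality of $U$ forces $U \cup W \subseteq U$, i.e.\ $W \subseteq U$, and in particular $Q \in U$. Since $U$ is open this contradicts $Q \in \partial U$. The slope case is handled identically: if $s' \in \partial V$ produced a trajectory from $P$ with no corner hit in its first $n$ collisions, applying Lemma \ref{continuitylemma} in the slope variable at $s'$ would give an open interval around $s'$ on which every starting slope produces the same first $n$ collisions as $s'$; intersecting this interval with $V$ would pin the $s'$-trajectory to the $s$-trajectory, and maximality of $V$ would then put $s'$ in $V$, contradicting $s' \in \partial V$.

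The main obstacle is essentially bookkeeping rather than any genuine geometric difficulty: one must be careful that Lemma \ref{continuitylemma} is applicable at $Q$ (respectively at $s'$) and not only at the originally chosen $P$ (respectively $s$), which is immediate from the generality with which that lemma is stated — any interior point whose trajectory avoids corners for $n$ collisions enjoys such a neighborhood. Beyond Lemma \ref{continuitylemma} and the maximality definitions of $U$ and $V$, no additional ingredient seems needed.
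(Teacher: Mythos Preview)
Your argument is correct and is essentially the paper's own proof: assume $Q\in\partial U$ avoids corners for $n$ collisions, apply Lemma~\ref{continuitylemma} at $Q$ to get a neighborhood, use the nonempty intersection with $U$ to match $Q$'s collisions to $P$'s, and then invoke maximality of $U$ to force that neighborhood inside $U$, contradicting $Q\in\partial U$. Your version is a bit more explicit about what ``largest'' means and why $U$ is open, but the structure and ingredients are identical.
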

\begin{proof}
Suppose there's some $Q$ in $\partial U$ such that this isn't true. Then there's a neighborhood $U'$ of Q agreeing on the first $n$ collisions by Lemma \ref{continuitylemma}; since $Q \in \partial U$, $U \cap U' \neq \emptyset$, so $U'$ agrees with P on the first $n$ collisions starting with slope $s$. By the maximality of U, we have $U' \subset U$, contradicting $Q\in \partial U$. The proof for V is identical. 
\end{proof}

\begin{corollary}
\label{strongercontinuity}
For $P$, $s$, and $n$ as above, there exist neighborhoods U and V of $P$ and $s$ such that all starting conditions in U$\times$V agree with $(P, s)$ on the first $n$ collisions.
\end{corollary}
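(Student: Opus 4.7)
The plan is to observe that the argument proving Lemma \ref{continuitylemma} actually establishes joint continuity of the trajectory in $(P, s)$, so that Corollary \ref{strongercontinuity} is essentially a restatement of the lemma with both variables allowed to move at once. The strategy is therefore not to deduce the corollary as a formal consequence of the lemma (which would require some uniformity argument) but to re-run the same proof tracking both $P$ and $s$ simultaneously.

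First I would set up the trajectory as a composition of continuous maps. The state at any moment is a pair (position, direction). The map sending a state to its next encounter point, together with the direction just before reflection, is a continuous function of the state on the open subset where the next encounter is not a corner, because it is essentially the intersection of a ray with the nearest horizontal or vertical grid line. Reflection off a horizontal or vertical wall is an isometry and so is continuous. Composing these maps $n$ times, the location of the $k$-th collision and the direction immediately afterward are continuous functions of $(P', s')$ on the open subset of parameter space for which the first $k-1$ collisions of the $(P', s')$-trajectory all avoid corners.

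Next I would use this joint continuity together with the hypothesis. By assumption, starting from $(P, s)$, each of the first $n$ collisions lands on a wall at a point whose distance to the two endpoints of that wall is some positive number; let $\delta > 0$ be smaller than every such distance. By continuity of the composition described above at the point $(P, s)$, there exist neighborhoods $U$ of $P$ and $V$ of $s$ so small that for every $(P', s') \in U \times V$, the first $n$ collisions of the $(P', s')$-trajectory are defined, avoid corners, and land within $\delta$ of the corresponding collisions of the $(P, s)$-trajectory. Because $\delta$ was chosen to keep the perturbed collision points inside the same edges as the original ones, the sequence of walls hit agrees with that for $(P, s)$, which is exactly the conclusion.

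There is no real main obstacle here; the substantive work is in Lemma \ref{continuitylemma}, and the only subtlety is bookkeeping. One has to rule out the possibility that a small perturbation causes the perturbed particle to acquire an extra encounter or to skip one before reaching its $k$-th collision, but the continuity of encounter locations away from corners handles this for sufficiently small neighborhoods, which is exactly why the non-corner hypothesis is needed. So the argument is essentially the observation that a composition of continuous maps is jointly continuous on the open domain where it is defined.
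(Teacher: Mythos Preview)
Your argument is correct, but it takes a different route from the paper's. You re-run the proof of Lemma~\ref{continuitylemma} with both $P$ and $s$ varying simultaneously, using the joint continuity of the composition of ray-intersection and reflection maps; this is the straightforward approach and would work in any setting where those maps are continuous, regardless of any order structure on directions. The paper instead deduces the corollary from the separate one-variable statements of Lemma~\ref{continuitylemma} together with a \emph{squeezing} observation: if from a fixed point $Q$ the slopes $s_1 < s_2$ produce the same first $n$ collisions, then so does every $\alpha \in (s_1,s_2)$, since the $\alpha$-path is trapped between the other two. The paper picks an interval $(s_1,s_2)\ni s$ on which slopes agree from $P$, then applies the lemma at $s_1$ and $s_2$ separately to get point-neighborhoods $V_1,V_2$ of $P$, and takes $V = V_1 \cap V_2$; for $Q\in V$ both endpoint slopes agree with $(P,s)$, so by squeezing every intermediate slope does too. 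Your approach is more direct and more portable; the paper's is a nice trick that avoids reopening the continuity argument and highlights a monotonicity property specific to this billiard setup.
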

\begin{proof}
If slopes $s_1$ and $s_2$ agree on the first $n$ collisions from a given point $Q$, then every slope $\alpha$ in $(s_1, s_2)$ also agrees, since the path of a particle with slope $\alpha$ is squeezed between the path with slope $s_1$ and the path with slope $s_2$. By Lemma \ref{continuitylemma}, there exists a neighborhood U$=(s_1,s_2)$, $s_1 < s < s_2$, in which every slope agrees with slope $s$ for the first $n$ collisions. Let $V_1$ ($V_2$) be the neighborhood of $P$ fixing the first $n$ collisions with slope $s_1$ ($s_2$). Then the result holds for $V = V_1 \cap V_2$.
\end{proof}

As discussed in Section 3, with a few minor exceptions where they correspond to the same starting point in the same cutting sequence, different starting conditions will eventually diverge. Still, knowing that sufficiently close starting conditions agree for a while is enough for us to extend Corollary \ref{periodiconcestarted} to irrational slopes and for us to prove that slopes slightly bigger than 3 can get the start they need to begin tunneling.

\begin{corollary}
\label{aperiodiconcestarted}
Suppose a particle with irrational slope $s \in (3, 3+\frac{1}{17}]$ begins the pattern of behavior described in Lemma \ref{reorglemma}. Then unless it hits a corner or runs into a part of the plane with previously-cleared walls, it will dig an aperiodic tunnel with slope $1 + \frac{3s-9}{25-8s}$.
\end{corollary}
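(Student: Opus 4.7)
The plan is to observe that Lemma \ref{reorglemma} does not use rationality of $s$ at any point: its hypothesis on the cutting sequence --- that consecutive $HHHHV$ chunks are separated by at least 17 $HHHV$ chunks --- is equivalent to $s \in (3, 3+\frac{1}{17}]$ and holds for irrational $s$ in this interval just as well. Therefore, once the reorganization pattern begins, it continues indefinitely, the tail of the particle's trajectory lies inside a single band, and the particle tunnels by Lemma \ref{twodirections}.

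For the slope of the tunnel, I would rerun the displacement bookkeeping from the proof of Corollary \ref{periodiconcestarted} asymptotically rather than one cutting-sequence period at a time. Writing $s = 3+\alpha$, any initial segment of the chunk sequence of length $b$ contains $a$ copies of $HHHHV$ and $b-a$ copies of $HHHV$ with $a/b \to \alpha$ as $b \to \infty$, since $s$ $H$s per $V$ asymptotically forces a fraction $\alpha$ of the chunks to be $HHHHV$. Pairing each $HHHHV$ with 13 subsequent $HHHV$s to form a displacement packet of $(2,3)$ and grouping the leftover $HHHV$s into triples of displacement $(1,1)$, the total displacement after $b$ chunks is
\[
a(2,3) + \tfrac{b-14a}{3}(1,1) + O(1),
\]
so the per-chunk displacement in the limit is proportional to $(25-8s,\,16-5s)$, whose slope is $1 + \frac{3s-9}{25-8s}$, identical to the rational case. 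Lemma \ref{twodirections} then pins the band's slope to this asymptotic direction of travel.

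For aperiodicity I would argue by contradiction. A periodic tunnel of period $k$ with constant displacement would mean that, from some collision $n$ onward, the particle's state (position mod the integer lattice together with direction) at collision $n+k$ equals its state at collision $n$ up to a fixed lattice translation; since the straight-line trajectory between consecutive collisions is determined by the state, the full trajectory would be periodic modulo translation, and with it the entire sequence of encounters --- that is, the cutting sequence of slope $s$. But the cutting sequence of an irrational slope is never eventually periodic, a contradiction. The main obstacle I expect is making the asymptotic slope argument rigorous --- in particular, controlling the $O(1)$ error uniformly so that the unique band guaranteed by Lemma \ref{twodirections} genuinely has the claimed slope rather than just a slope within some error bound that shrinks along the trajectory.
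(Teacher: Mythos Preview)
Your approach is correct but differs from the paper's in how the tunnel slope is established. The paper does not redo the displacement bookkeeping asymptotically; instead it squeezes $s$ between rational approximations $p<s<q$, uses Lemma~\ref{continuitylemma} to arrange that $p$ and $q$ agree with $s$ until after tunneling has begun, observes that in each column the walls cleared by the $s$-particle lie between those cleared by the $p$- and $q$-particles (since a higher density of $HHHHV$ chunks yields a steeper tunnel), and then lets $p,q\to s$, invoking continuity of the formula $1+\frac{3s-9}{25-8s}$ from the already-proved rational case (Corollary~\ref{periodiconcestarted}) to pin down the band. Your direct asymptotic computation is more self-contained and avoids the continuity lemma, but as you note it requires controlling the $O(1)$ term; the missing ingredient is the standard Sturmian fact that in a sequence of slope $\alpha$ the number of one symbol among the first $b$ symbols differs from $\alpha b$ by at most $1$, which makes your error genuinely bounded and hence places the trajectory in a single band of the claimed slope. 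The paper's sandwich argument sidesteps this entirely by borrowing the bands from the rational approximants. Your aperiodicity argument agrees with the paper's one-line observation that an irrational slope has an aperiodic cutting sequence.
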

\begin{proof}
Since particles with irrational slope have aperiodic cutting sequences, they can't dig periodic tunnels; we just need to show that the behavior described in Lemma \ref{reorglemma} does dig a tunnel with an appropriate slope. We already know this is true for particles of rational slope under the same assumptions by Corollary \ref{periodiconcestarted}, so the basic idea here is to squeeze our $s$ between better and better rational approximations. Again, we can assume that the particle is headed up and to the right because reflection does not change the existance or slope of a tunnel. 

By Lemma \ref{continuitylemma}, there exists a neighborhood of $s$ on which every slope agrees with $s$ until after $s$ has begun (what we want to show is) tunneling. Pick rational $p,q$ such that $p<s<q$, they both satisfy Corollary \ref{periodiconcestarted} and they both agree with $s$ until after they've begun tunneling. Then we can choose balls for the definition of $p$ and $q$'s tunnels such that every point where they disagree with $s$ is in the appropriate bands rather than the balls.

Now, a particle behaving as in Lemma \ref{reorglemma} will travel with slope 1 when not reorganizing (i.e. no recent $HHHHV$s) and with slope 1.5 over a reorganizational patch (i.e. in the vicinity of an $HHHHV$). The overall slope of the tunnel therefore increases the more $HHHHVs$ you have in the cutting sequence, justifying our observation that for rational slopes the tunnel's slope was an increasing function. We can therefore say that, since $s<q$, in any given column the walls cleared by the $s$-particle are level with or lower than the walls cleared by the $q$-particle, so the walls cleared by the $s$-particle outside of the ball are bounded above by the upper boundary of the $q$-band. Similarly, the walls cleared by the $s$-particle outside of the ball are bounded below by the lower boundary of the $p$-band. But we know from Corollary \ref{periodiconcestarted} the slopes of these boundaries are given by a continuous function of $p$ (or $q$), so letting $p$ and $q$ converge to $s$ the boundaries will converge to form a band with slope given by that function. 
\end{proof}

We now know that both rational and irrational slopes will tunnel if they run into the starting conditions for Lemma \ref{reorglemma}, which means we are ready to prove our main result.

\begin{theorem}
\label{everypointtunnels}
For every point P in the interior of the unit square, there is some $\epsilon$, $0 < \epsilon \leq \frac{1}{17},$ such that for every slope $s$ in $[3,3+\epsilon]$ a particle starting at P with slope $s$ either hits a corner or clears a tunnel, with the tunnel's slope being $1 + \frac{3s-9}{25-8s}$.
\end{theorem}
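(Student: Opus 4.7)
The plan is to combine the verified $s=3$ case of Conjecture \ref{alwaysperiodic} with the continuity results of Section~6, using the former to guide the slope-$3$ trajectory into the Lemma \ref{reorglemma} configuration and the latter to drag nearby slopes along for the ride. Fix $P$ in the interior of the unit square. Note that the formula $1+\frac{3s-9}{25-8s}$ specializes to $1$ at $s=3$, matching the slope of the tunnel known to be dug by slope $3$; the $s=3$ case of the theorem is therefore exactly the verified case of Conjecture \ref{alwaysperiodic}, which says the slope-$3$ particle from $P$ either hits a corner or eventually enters a fresh column immediately after clearing its predecessor in the slope-$3$ pattern (i.e.\ the hypothesis of Lemma \ref{reorglemma}).

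In the main case, the slope-$3$ particle from $P$ reaches the Lemma \ref{reorglemma} configuration at some finite collision number $n$ without first hitting a corner. Lemma \ref{continuitylemma} then supplies a neighborhood $V$ of $3$ such that every slope in $V$ from $P$ has the same first $n$ collisions as slope $3$. Choose $\epsilon(P)>0$ so that $[3,\,3+\epsilon(P)]\subset V\cap[3,\,3+\tfrac{1}{17}]$; then every $s\in(3,\,3+\epsilon(P)]$ reaches the Lemma \ref{reorglemma} configuration at collision $n$. Lemma \ref{reorglemma} propagates the reorganization indefinitely, since the conclusion of each application (fresh column, previous column slope-$3$-cleared) supplies the hypothesis of the next, and the bound $s\le 3+\tfrac{1}{17}$ guarantees enough $HHHV$ chunks between successive $HHHHV$ chunks of the cutting sequence. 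Corollary \ref{periodiconcestarted} for rational $s$ and Corollary \ref{aperiodiconcestarted} for irrational $s$ then deliver a tunnel of slope $1+\frac{3s-9}{25-8s}$.

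The obstacle is the corner case: slope $3$ from $P$ may instead hit a corner at some collision $m$ before reaching the Lemma \ref{reorglemma} configuration, in which case Lemma \ref{continuitylemma} only provides agreement up to collision $m-1$. The theorem is satisfied at $s=3$ by the corner hit itself, but for slopes slightly above $3$ we must still produce either a corner hit or a tunnel. To handle this, I would note that for small $\delta>0$ the slope $3+\delta$ from $P$ arrives at encounter $m$ perturbed from the lattice corner by an amount of order $\delta$: either it still hits the corner (and the disjunction is satisfied) or it makes a genuine collision with one of the two walls meeting at the corner, leaving it in a new state close to the state slope $3$ would have occupied had the corner wall been absent. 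From this new state I would iterate the argument, comparing the slope-$s$ trajectory to an alternative slope-$3$ trajectory in which the corner has been ignored; since slope $3$ has a periodic cutting sequence on the square torus and only finitely many meaningfully distinct starting positions (Section~3), this iteration must terminate in finitely many steps, at which point either a corner is hit or a Lemma \ref{reorglemma} configuration is reached and the main-case argument finishes the proof.

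The main technical hurdle is making this corner iteration rigorous — specifically, showing that $\epsilon(P)$ can be shrunk only finitely often while remaining positive, and that an \emph{ignore the corner} slope-$3$ trajectory is a legitimate surrogate for the actual slope-$s$ behavior. I expect Corollary \ref{boundarycorners} together with the periodicity of the slope-$3$ cutting sequence to supply what is needed.
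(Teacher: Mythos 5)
Your main case is the paper's argument: for $P$ off the exceptional set, slope $3$ quickly reaches the Lemma \ref{reorglemma} configuration, Lemma \ref{continuitylemma} gives a slope-neighborhood of $3$ agreeing on those first $n$ collisions, and Corollaries \ref{periodiconcestarted} and \ref{aperiodiconcestarted} finish. (One small correction of emphasis: the paper does not lean on the verified case of Conjecture \ref{alwaysperiodic}; it only needs the much weaker, directly checkable fact that slope $3$ starts tunneling within a few collisions from every starting position except those on the slope-$3$ line through the lattice points, since by Section~3 there are only four meaningfully distinct starting positions for slope $3$.)

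The corner case is where you diverge, and where your proposal has a genuine gap. Your iteration -- perturbed particle nearly hits the corner, collides with one of the two incident walls, compare with an ``ignore the corner'' slope-$3$ trajectory, repeat -- has two problems you have not resolved. First, after such a near-corner collision the particle has erased a wall and reflected in a way that no clean slope-$3$ run from any starting point reproduces, so the surrogate trajectory is not obviously legitimate; the state space includes the set of erased walls, not just position and direction, and these now disagree. Second, each round of the iteration shrinks $\epsilon(P)$, and periodicity of the slope-$3$ cutting sequence does not by itself bound the number of rounds, since the corner encounters of the perturbed particle need not line up with the finitely many corner-hitting phases of slope $3$. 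The paper sidesteps all of this with a one-step trick: a particle from $P$ with slope $s>3$ has the same first few collisions as a particle with slope exactly $3$ started from a point slightly above $P$. That perturbed starting point can be chosen off the exceptional line, where slope $3$ is already known to start tunneling quickly, so taking $s$ close enough to $3$ (and the comparison point close enough to $P$) forces agreement long enough to enter the Lemma \ref{reorglemma} regime -- no iteration, no surrogate trajectory, and no corner is ever approached. If you want to salvage your structure, replacing the corner iteration with this trade of a slope perturbation for a starting-point perturbation (essentially Corollary \ref{strongercontinuity} read in the other variable) is the missing idea.
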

\begin{proof}
We know from Corollaries \ref{periodiconcestarted} and \ref{aperiodiconcestarted} that if non-3 slopes in the given range start tunneling and have an untouched plane ahead of them they keep tunneling with the stated slope. We've already seen that when slope 3 tunnels, it digs a tunnel whose slope is 1, so the slope formula works there too. It's easy to check that particles with slope 3 quickly start tunneling into an untouched plane for any starting position except the line of slope 3 going from (0,0) to (1,1) and wrapping around twice, for which it quickly hits a corner. For P not on that line, the result follows immediately by Lemma \ref{continuitylemma}. For P on the line, we note that a slope $s>3$ will have the same first few collisions as a particle starting slightly above P with slope 3; taking $s$ close enough to 3 and the other point close enough to P, the two must agree long enough to start tunneling. 
\end{proof}

\begin{corollary}
\label{everypointinfiniteperiod}
For every point P in the interior of the unit square, there exist slopes such that a particle starting at point P with the given slope tunnels periodically with arbitrarily large period.
\end{corollary}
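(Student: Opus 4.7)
The plan is to combine Theorem \ref{everypointtunnels} with the displacement calculation in the proof of Corollary \ref{periodiconcestarted}, applied to rational slopes whose denominators grow without bound.

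Fix $P$ in the interior of the unit square and let $\epsilon = \epsilon(P) > 0$ be the constant supplied by Theorem \ref{everypointtunnels}. For every sufficiently large integer $n$ the rational slope $s_n = 3 + 1/n$ lies in $[3, 3+\epsilon]$, and by Theorem \ref{everypointtunnels} it either hits a corner or clears a tunnel; in the latter case the tunnel is periodic by Corollary \ref{periodiconcestarted}. Substituting $a = 1$, $b = n$ into the displacement formula in the proof of that corollary, one pass through the periodic cutting sequence translates the particle by $\tfrac{1}{3}(n - 8,\, n - 5)$, tripled if $n - 14$ is not divisible by $3$. In either case the magnitude of this vector is $\Theta(n)$, so the minimal lattice translation along the tunnel direction has magnitude $\Omega(n)$. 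Between consecutive collisions in the tunneling regime only $O(1)$ encounters intervene (as can be read off Table \ref{maintable}) and each encounter spans bounded Euclidean distance, so the tunnel period in collisions is also $\Omega(n)$ and diverges as $n \to \infty$.

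It remains to ensure that infinitely many slopes in this family really do tunnel rather than hit corners. The corner-hitting slopes from $P$ form a countable set: the trajectory unfolds to a straight line, and each lattice point contributes at most one slope per reflection of $P$. For generic $P$ (for example, when the coordinates of $P$ are rationally independent of $1$ and of each other) no rational slope is corner-hitting at all, so every $s_n$ tunnels. In the remaining non-generic cases one instead exhibits rationals $3 + a/q$ with $q$ a large prime and $a$ coprime to $q$, using that the $\sim \epsilon q$ reduced representatives with denominator $q$ in $(3, 3+\epsilon]$ eventually overwhelm the $O(q/v)$ exceptions forced by the arithmetic of $P$'s reflections (where $v$ is a common denominator of the coordinates of those reflections).

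The main obstacle I anticipate is the linear lower bound on collisions per tunnel period, which requires knowing that the collision-to-encounter ratio within each cycle stays bounded below by a positive constant; this is visible from Table \ref{maintable}, where each chunk produces at least one collision, so a cutting-sequence period of length $4n + 1$ yields $\Theta(n)$ collisions. The corner-avoidance step is essentially a cardinality exercise and causes no essential difficulty once the countability of the corner-hitting set is observed.
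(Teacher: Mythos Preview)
Your approach is essentially the paper's: choose rational slopes in $(3,3+\epsilon(P)]$ with arbitrarily long cutting-sequence period and invoke Theorem~\ref{everypointtunnels} together with Corollary~\ref{periodiconcestarted}. The paper dispatches this in two sentences by observing that the tunnel period is governed directly by the cutting-sequence period (since the collision pattern in the tunneling regime is dictated by where the $HHHHV$ chunks fall); your detour through the displacement formula and the collision-to-encounter ratio is correct in outline but unnecessary for the bare statement.

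One claim you make is literally false: in Table~\ref{maintable} the chunk at encounters $13$--$16$ consists entirely of pass-throughs, so not every chunk contains a collision. What you actually need---that the collision-to-encounter ratio over a full cycle is bounded below by a positive constant---is true (six collisions per twelve encounters in the slope-$3$ portions, a fixed positive count in each reorganization block), and that is enough for the $\Omega(n)$ bound.

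You also address corner-hitting, which the paper's own proof glosses over entirely; that is to your credit. But your non-generic-$P$ sketch via prime denominators is vaguer than necessary. A direct check is cleaner: the slope $3+1/n$ hits a corner from $P=(x_0,y_0)$ exactly when $n(3x_0-y_0)+x_0\in\mathbb{Z}$. If $3x_0-y_0$ is irrational this can hold for at most one $n$; if $3x_0-y_0=p/q$ in lowest terms it holds only for $n$ in a single residue class modulo $q$ (and only when $qx_0\in\mathbb{Z}$). Either way infinitely many of the slopes $3+1/n$ avoid corners, which is all you need.
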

\begin{proof}
The length of the period of a periodic tunnel-with-reorganization depends on when the spacing of the $HHHHV$s begins to repeat, hence on the length of the period of the cutting sequence. One can find cutting sequences in the appropriate region of arbitrary length by choosing a sufficiently complicated continued fraction, or just a slope close enough to 3.
\end{proof}

 \begin{corollary} 
 \label{uncountabletunnels}
 For any P in the interior of the unit square, there are uncountably many slopes which produce tunnels starting at P.
 \end{corollary}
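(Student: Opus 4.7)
The plan is immediate once Theorem \ref{everypointtunnels} is in hand: that theorem already establishes that every slope $s$ in the interval $[3, 3+\epsilon]$ (for a suitable $\epsilon > 0$ depending on $P$) either clears a tunnel from $P$ or hits a corner. Since $[3, 3+\epsilon]$ is uncountable, the corollary reduces to showing that only countably many slopes in this interval cause the particle starting at $P$ to hit a corner.

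To count the corner-hitting slopes, I would use the unfolding perspective implicit in Section 3. By Lemma \ref{cuttingsequence}, the sequence of encounters of the particle starting at $P$ with slope $s$ is identical to the cutting sequence of the straight line through $P$ with slope $s$ on the infinite grid; moreover, each encounter of the particle corresponds, via the sequence of reflections, to a crossing of this straight line with a horizontal or vertical grid line. A corner hit occurs precisely when a horizontal and a vertical encounter coincide, which in the unfolded picture means that the straight ray from $P$ of slope $s$ passes through an integer lattice point other than its starting position.

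Writing this condition out with $P = (x_0, y_0)$: the ray passes through $(a, b) \in \mathbb{Z}^2$ if and only if $s = (b - y_0)/(a - x_0)$. Since $\mathbb{Z}^2$ is countable, the set of slopes causing a corner collision from $P$ is countable, so its intersection with $[3, 3+\epsilon]$ is countable as well. The uncountably many remaining slopes in the interval each produce a tunnel from $P$ by Theorem \ref{everypointtunnels}, giving the claim. In fact, restricting further to the irrational slopes in the interval, Corollary \ref{aperiodiconcestarted} shows that uncountably many of the resulting tunnels are aperiodic, recovering the stronger statement advertised in the introduction.

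There is essentially no obstacle once Theorem \ref{everypointtunnels} is available; the only item to verify carefully is the lattice-point characterization of corner collisions, and this is an immediate consequence of the straight-line/cutting-sequence correspondence that the paper has already established.
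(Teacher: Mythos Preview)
Your proof is correct and follows essentially the same approach as the paper: invoke Theorem~\ref{everypointtunnels} on $[3,3+\epsilon]$, then observe via the unfolding/reflection correspondence that corner-hitting slopes are exactly those for which the straight line from $P$ passes through an integer lattice point, a countable set. The paper's proof is terser but makes the identical argument.
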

 \begin{proof}
 Reflection doesn't change whether a particle hits a corner, so the set of slopes which hit a corner is the same as the set of slopes which cross elements of the integer lattice starting from P, which is countable. The slopes in the real interval $[3,3+\epsilon]$ not hitting corners from P, which we know produce tunnels, are therefore uncountable in number.
 \end{proof}

 \section{Slope 146 And Delayed Tunneling}
 
  \begin{figure}[h]
\begin{center}
\includegraphics[height=8 cm]{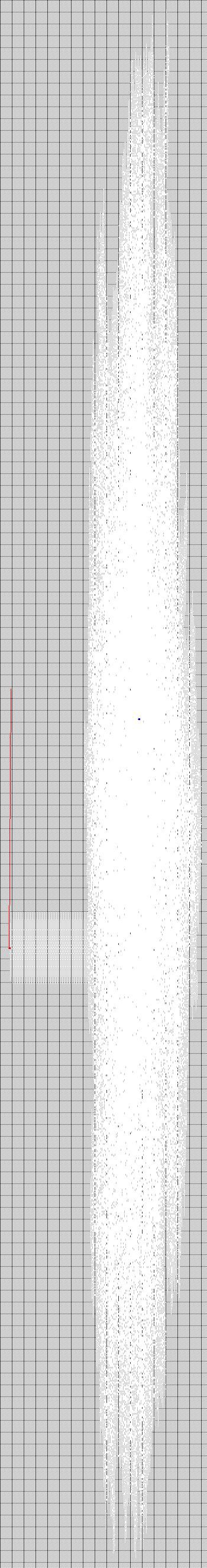}
\includegraphics[height=7 cm]{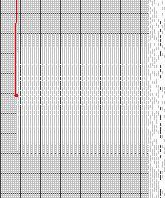}
\end{center}
\caption{\label{slope146}
A particle starting with slope 146 from the center clears a skinny oval for a long time, but somewhere between 150,000-160,000 collisions it begins a periodic tunnel going straight left. The full cleared region is on the left, a zoom of the tunnel on the right.
}
\end{figure}

\begin{figure}[h]
\begin{center}
\subfigure[ ]{\includegraphics[height=12 cm]{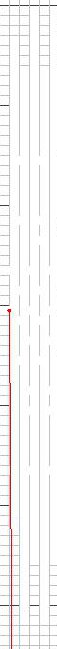}} \hfill
\subfigure[ ]{\includegraphics[height=12 cm]{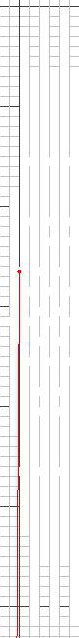}} \hfill
\subfigure[ ]{\includegraphics[height=12 cm]{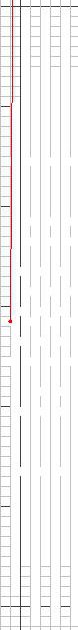}}
\end{center}
\caption{\label{146close}
Three different stages of the periodic tunnel formed by slope 146.
}
\end{figure}
 
 While it would be convenient if every set of starting conditions which produced a tunnel started tunneling quickly enough to be hand-checkable, there's no reason to believe that should be true and on simulation we see quite the contrary. The most spectacular example of a slope which can tunnel but only after a long time setting up is 146. Figure \ref{slope146} shows what happens to a particle starting from the center with slope 146; it turns out that this is far from the the longest it can take to get started.

Discussing its range of behaviour will need some way of labeling the various starting locations:

\begin{definition}
\label{startnumbering}
There are 147 meaningfully different starting locations for a particle with slope 146, since it has a cutting sequence of length 147. We can see those locations on the square by taking a line with slope 146 from (0,0) and wrapping it around repeatedly until it hits (1,1). The line will divide the square into 147 regions. We number the region on the far left 0, the next region 1, and so on til region 146 on the right; looking at the cutting sequence, these numbers correspond to the number of horizontal encounters between the next encounter and the previous vertical encounter (so where the particle just had a vertical encounter, it's numbered 0).
\end{definition}
 
The author tested every starting location and recorded the full results  \href{https://docs.google.com/spreadsheet/ccc?key=0AhFI-rEFDT-rdERtbVY1ekFVTWlPRjhCNjdpLXRReVE&usp=sharing}{in a public Google spreadsheet}. The number of steps displayed for each start to begin tunneling is approximate, but is the lowest possible answer with the given number of significant digits. Every starting location eventually begins tunneling, but it often takes a long time: the mean number of collisions needed is around 1.42 million, and the slowest starting location only shows signs of tunneling after 7.18 million collisions.

The reason slope 146 takes longer to start tunneling than anything we've looked at before is that the periodic tunnel it creates is more complicated. Rather than bouncing back and forth across one column until it's ready to move to the next, the particle ducks back into the previous column partway through. Figure \ref{146close} shows this process. On the left, the particle is entering a new column for the first time; we can see that the column isn't quite fresh and, more importantly, the column it's leaving hasn't been fully cleared yet. In the middle, the particle is heading back into the incomplete column. And on the right, it's entering another new column, with the original column now cleared like the ones before it and the middle column partly cleared; as far as the particle's immediate surroundings go, the right-hand picture is a vertical flip of the left-hand picture. Getting to the perfect vertical flip takes 61 collisions or 1323 encounters; the tunnel is therefore periodic with period 122 collisions / 2646 encounters.

This need to return to the previous column means that the particle cannot start tunneling without a fairly precise set of conditions - it needs the previous column to have exactly the right horizontal walls cleared \emph{and} to have the vertical walls it passes through or hits in the appropriate state. It is therefore unsurprising that, while these conditions do always occur, they often take a while to arise. It is an open question whether there is a possible periodic tunnel in which the particle works on more than two columns simultaneously - and, if such a tunnel exists, whether the particle actually tunnels from every starting location.
 
 \section{Non-Tunneling Behaviour}

More basically, it's reasonable to ask whether the particle always has to produce some kind of tunnel. There's no obvious reason that the particle should fall into any kind of pattern instead of clearing the entire plane, and simulations suggest that for a lot of starting conditions the particle does just keep clearing wider and wider areas around its starting point.  However, proving that this continues indefinitely has so far been an intractable problem - the particle could eventually hit some specific set of conditions causing it to go periodic, and we saw in the previous section that this can take a long time to happen for sufficiently specific conditions (See Figure \ref{slope146}). 

 \begin{figure}[h]
\begin{center}
\includegraphics[height=6 cm]{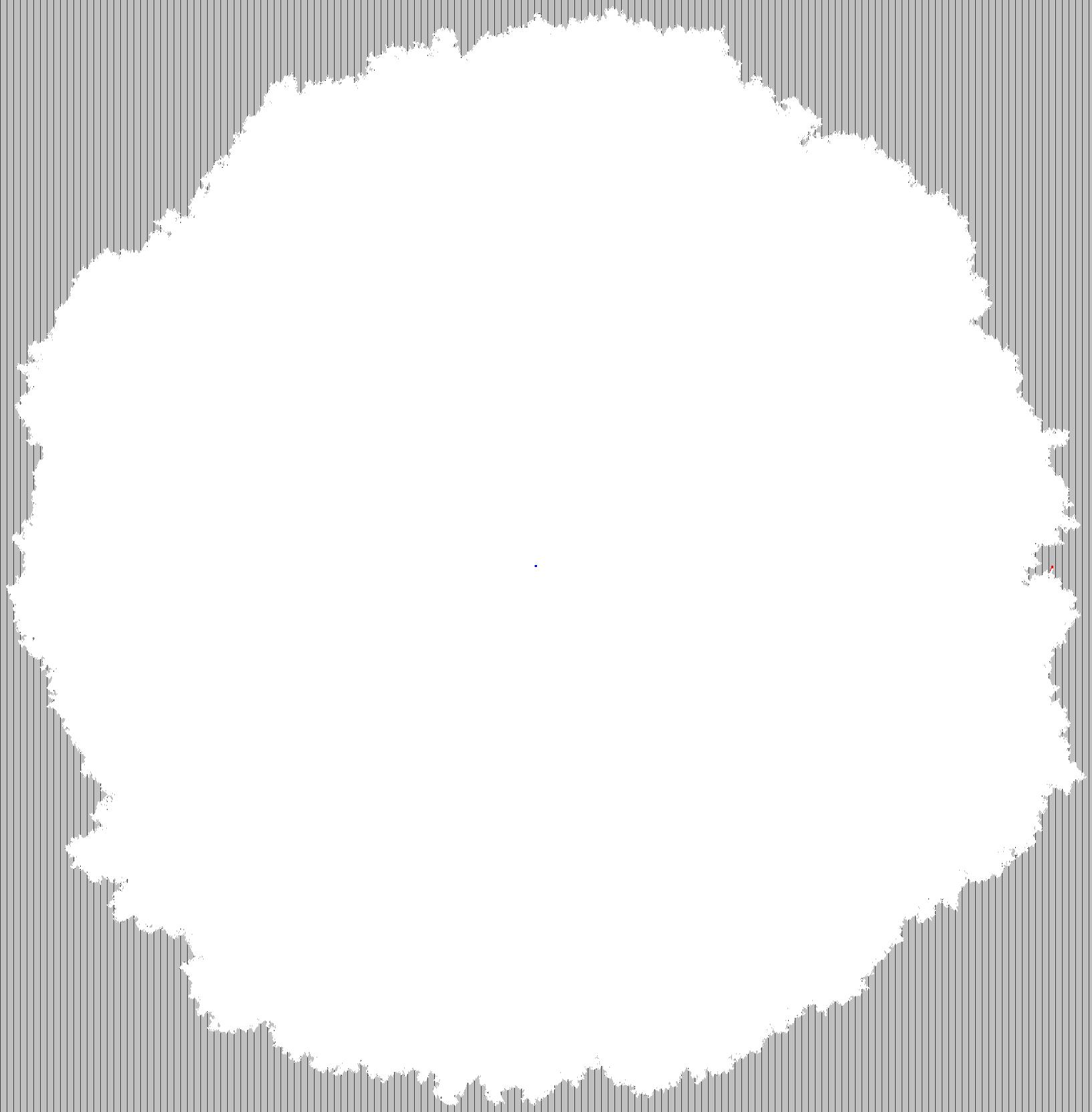}
\includegraphics[height=6 cm]{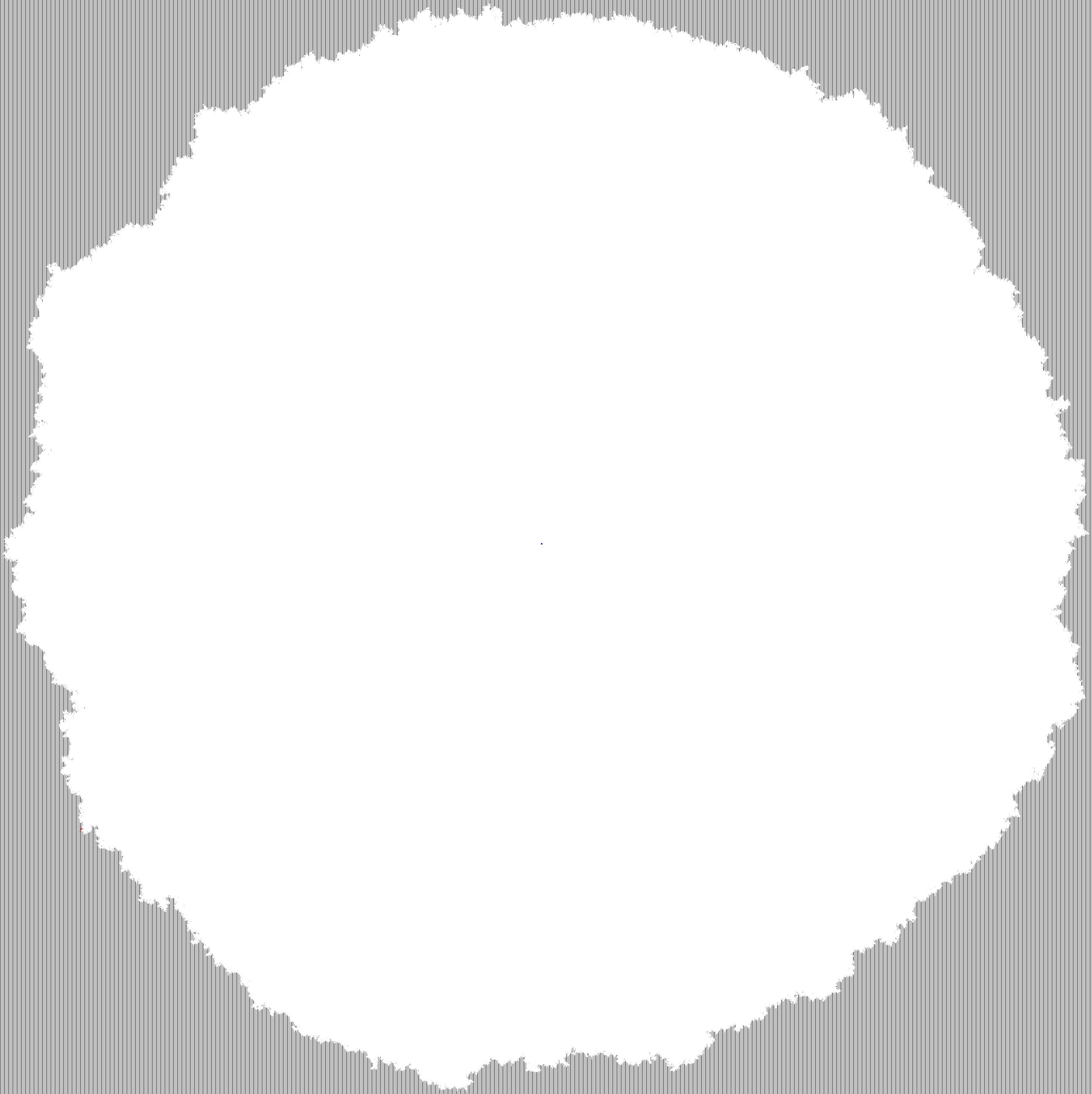}
\end{center}
\caption{\label{2circle}
A particle with slope 2 from the center of the square seems to just keep clearing a larger and larger roughly circular blob. The figure on the left is after four million collisions, the figure on the right after ten million.
}
\end{figure}

Still, given the length of time which slope 2 goes without clearing anything vaguely resembling a tunnel, the following conjectures seem plausible, in increasing order of strength:

\begin{conjecture}
\label{clearing1}
There exist starting conditions for which the particle passes within some bounded distance of every wall in the plane. 
\end{conjecture}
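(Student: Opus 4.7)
The plan is to reduce the conjecture to a non-tunneling property combined with a directional coverage property, and to propose candidate starting conditions motivated by Figure \ref{2circle}. Specifically, I would try to prove the statement for a particle with slope $s=2$ starting from the center $(\tfrac12, \tfrac12)$, where the simulation evidence is strongest. The first ingredient is Lemma \ref{twodirections}: if the particle does not tunnel, its cleared region is not contained in any finite union of bands, which is a weak form of directional spreading that we will try to strengthen into a density statement.

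The first step is to establish non-tunneling. The plan here is to use the cutting sequence $\overline{HHV}$ for slope $2$ from Section 3, together with the observation that the starting point $(\tfrac12, \tfrac12)$ has a reflective symmetry across $y=\tfrac12$ which is preserved by the dynamics until the particle first breaks that symmetry. One would look for an invariant (for instance, a parity condition on the number of horizontal vs.\ vertical collisions modulo some small integer) that would be incompatible with the translational regularity forced on a tunnel by Lemma \ref{twodirections} and the quadratic-growth phenomenon from Lemma \ref{quadraticencounters}. If any such invariant survives forever, the particle cannot tunnel.

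The second step is the density upgrade: from \emph{not contained in a union of bands} to \emph{within bounded distance of every wall}. Here the plan is to exhibit a self-similar structure, showing that after a certain number of collisions the particle returns to a configuration locally resembling its starting configuration but shifted by some displacement vector, and that the set of achievable displacements generates a $2$-dimensional sublattice of $\mathbb{Z}^2$ rather than a $1$-dimensional one (the latter would produce a tunnel). Combined with the fact that the particle clears a neighborhood of each such return point of diameter at least some fixed constant (because the local cutting sequence forces at least a few collisions per return), a lattice of returns with bounded cells would force the cleared region to be $d$-dense for some fixed $d$.

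The main obstacle is the absence of any Lyapunov function or invariant that is robust to the evolving wall configuration. Section 7 (slope $146$) shows that the particle can spend millions of collisions in an apparently non-tunneling regime before settling into a periodic tunnel, so any short-term or purely local analysis cannot rule out eventual tunneling; we need a truly global argument. The second obstacle is that even granted non-tunneling, passing from \emph{unbounded in all directions} to \emph{$d$-dense in the plane} is nontrivial, because the orbit could in principle visit many directions only along increasingly sparse rays. The self-similar recursion I described in the second step is the crux: establishing it would require a global understanding of how the cleared region at scale $n$ forces the cleared region at scale $2n$, which is essentially the difficulty the author flags when saying the problem has so far been intractable.
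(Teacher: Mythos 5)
This statement is Conjecture \ref{clearing1}: the paper offers no proof of it, explicitly describes the problem as ``so far\ldots intractable,'' and only supports it with simulation evidence for slope $2$ from the center. So there is no proof in the paper to compare yours against, and your submission is, by its own admission, a research plan rather than a proof --- neither of its two load-bearing steps is actually carried out. Concretely: in Step 1 you never exhibit the invariant. The symmetry you invoke is not there --- reflecting the center across $y=\frac{1}{2}$ sends a particle of slope $2$ to a particle of slope $-2$, a different orbit, so a single orbit starting at $(\frac{1}{2},\frac{1}{2})$ heading up-right has no such self-symmetry to preserve. Lemma \ref{quadraticencounters} is also not available to you here: it is proved only for the single-wall bomb while the particle remains inside one fresh column, and a slope-$2$ particle clearing a blob leaves that regime almost immediately. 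Even if some parity invariant existed, you would still have to show it is incompatible with \emph{every} possible periodic tunnel, including multi-column ones like the period-$122$ tunnel of Section 7, not just the simple one-column tunnels of Section 4.

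Step 2 has an independent gap. Because walls are only ever destroyed (the $2^Z$ coordinate of the state is monotone decreasing), an exact translated recurrence of the local configuration is precisely what defines a periodic tunnel; a two-dimensional lattice of such recurrences would require the particle to repeatedly return near the origin and head out into fresh territory in genuinely different directions, and you give no mechanism forcing this. Moreover, the negation of tunneling only says the collision set is not contained in finitely many bands; that is compatible with collisions concentrating along infinitely many increasingly sparse rays, which would leave most walls at unbounded distance from the orbit --- your own ``second obstacle'' concedes exactly this, and nothing in the proposal overcomes it. In short, you have correctly identified the candidate starting conditions (the same ones the paper conjectures work in Conjecture \ref{clearingstrong}) and correctly diagnosed why the problem is hard, but no step of the argument is established, so the conjecture remains open.
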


\begin{conjecture}
\label{clearingstrong}
There exist starting conditions for which the particle eventually hits every wall in the plane. In particular, this happens for particles with slope 2 at any point in the square from which they don't hit a corner.
\end{conjecture}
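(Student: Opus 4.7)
The plan is a two-pronged attack: a monotonic growth argument showing the cleared region expands without bound, combined with an exclusion argument ruling out the only alternative to clearing every wall, namely eventually entering some tunnel. I would first set up a combinatorial framework exploiting the fact that slope 2 has the short periodic cutting sequence $\overline{HHV}$, so the sequence of encounter types is completely predictable and wall-type at step $n$ is decoupled from the geometry of which walls are currently solid.

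Next I would attempt to prove that the cleared region at collision $n$ contains a disc of radius $r(n)$ with $r(n) \to \infty$. The most promising tool is the quadratic-growth phenomenon of Lemma \ref{quadraticencounters}: each time the particle revisits a column it has already worked on, it extends the cleared portion of that column by exactly one wall on each side, so re-traversals have a predictable cost. The goal would be to show that the particle cannot spend too many consecutive collisions inside the already-cleared region without being forced, by the $\overline{HHV}$ pattern, to strike a solid wall on the frontier. One would formalize this by tracking a ``potential function'' (for instance, the number of solid walls within some notion of reach) and proving it strictly increases over any sufficiently long window of collisions not hitting a corner.

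Third, one must rule out that the particle eventually settles into a periodic or aperiodic tunnel. By Lemma \ref{twodirections}, any bidirectional tunnel has slope equal to the particle slope, hence slope 2; and Conjecture \ref{bidirectionalconjecture} already predicts no such tunnel exists. A unidirectional tunnel of any other slope would have to be entered from a specific set of wall configurations, and one could attempt to catalog the possible tunnel geometries and show none are compatible with a slope-2 particle that arrived by working through the oval-shaped growing region observed in simulation. For the weaker existence claim, one might relax to a probabilistic/genericity statement: among all starting positions in the square for a fixed slope like 2, the set of starting conditions which eventually enter a tunnel is meager or measure-zero, leaving uncountably many starts which must clear everything.

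The main obstacle is precisely this exclusion step. Section 7 demonstrates that slope 146 can take millions of collisions to begin tunneling, so simulation evidence cannot by itself rule out that slope 2 is doing the same at a scale beyond computational reach. Worse, because the slope-2 cutting sequence is so short, we lack the aperiodic variety of encounter patterns which was essential to every tunneling proof in Sections 4--6; the very rigidity that might make growth provable also makes it conceivable that some hidden periodic tunnel exists. A genuine proof would likely need a structural invariant unique to slope 2---perhaps a parity or congruence obstruction showing that no periodic wall-clearing pattern with slope-2 dynamics can close up---and I do not expect the plan above to succeed without substantial new machinery of this kind.
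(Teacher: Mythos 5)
The statement you are addressing is Conjecture \ref{clearingstrong}; the paper offers no proof of it and explicitly lists it in Section 10 as the central open problem, so there is no proof of record to compare your plan against --- only the paper's own stated reasons for believing the problem is hard. Your sketch is a reasonable research program, and you are right to flag the exclusion step as the obstacle, but as written it has concrete gaps beyond the one you name. First, the growth step is not yet an argument: Lemma \ref{quadraticencounters} describes a particle confined to a single fresh column, where each return to a given square costs exactly four more encounters because exactly one new wall is cleared above and one below; nothing in the paper or in your sketch shows that this bookkeeping survives once the particle wanders across many partially-cleared columns, which is precisely the slope-2 regime. The proposed potential function is named but never defined, and defining one that provably increases over every long window is the whole difficulty. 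Second, your tunnel-exclusion step leans on Conjecture \ref{bidirectionalconjecture}, which is itself unproven; one open conjecture cannot be used to discharge another.

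Third, and most importantly, your plan assumes a dichotomy --- either the particle tunnels or it hits every wall --- that the paper deliberately does not claim. Section 10 notes that it is unknown whether there is any structured long-term behavior besides tunnels, and the paper states the strictly weaker Conjecture \ref{clearing1} (passing within bounded distance of every wall) separately from Conjecture \ref{clearingstrong} precisely because unbounded growth of the cleared region does not imply that every individual wall is eventually struck: a wall can be skirted forever or only ever traversed after it has been erased. Any complete proof must close this third alternative, not just the tunneling one. Finally, the measure-theoretic relaxation you suggest could at best address the existential first sentence of the conjecture, not the ``at any point in the square from which they don't hit a corner'' clause, which is universal. Your own closing assessment is accurate: without a new structural invariant this plan does not close, and the paper reaches the same conclusion.
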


\section{Wedge-Shaped Bombs}

Most of this paper has focused on the simple single-wall bomb, but there are some types of behavior which are more easy to see when looking at other bombs. Bidirectional tunnels seem more common once we start looking at more complex bombs, and we've only seen the same initial direction produce meaningfully distinct tunnels once nontrivial bombs get involved. To give a feel for how this can play out, we'll investigate bombs in the shape of a triangular wedge growing away from the point of impact. We define a winged wedge of size $n$ according to the picture:

\begin{figure}[h]
\begin{center}
\includegraphics[height=4 cm]{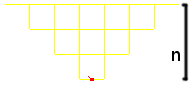}
\end{center}
\caption{\label{wingedwedgesample}
A winged wedge of size $n$. The particle clears one horizontal wall in the first row, three in the second, and so on up til $(2n+1)$ in row $(n+1)$, along with every vertical wall which touches cleared horizontal walls at both ends. Equivalently, it clears an isosceles triangle of height $n$ squares, plus one edge at each far corner.
}
\end{figure}

An unwinged wedge of size $n$ is the same shape minus the two edges at the far corners. The two types of wedge behave very similarly; we treat the winged wedge as the default because, when the two diverge, its behavior has the more interesting structure. 

The obvious question to ask, when working with bombs whose size we can vary, is whether there is any predictable relationship between a slope and which wedge sizes cause it to tunnel. Simulation suggests that there is some relationship:

\begin{conjecture}
\label{sdividesn+1}
Suppose we start a particle with integer slope $s>1$ and a wedge (either type) of size $n$, from a point in the square which will not cause it to hit a corner. If $s|(n+1)$, the particle will eventually tunnel.
\end{conjecture}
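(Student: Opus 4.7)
My plan is to mimic the strategy used for the slopes $s_n$ in Section 4, adapting it to wedge bombs. First I would pin down the fresh-column dynamics under the winged-wedge rule: by Lemma \ref{cuttingsequence} the encounter sequence for integer slope $s$ is $\overline{H^s V}$, and each detonation at a horizontal wall hit from below erases a triangular stack of $n+1$ rows of horizontal walls (with $1, 3, \ldots, 2n+1$ walls in those rows, plus the connecting vertical walls). The divisibility hypothesis $s \mid (n+1)$ is precisely the statement that the vertical extent of the wedge, measured in rows, is an integer multiple of the number of horizontal encounters per vertical encounter. This is the arithmetic coincidence I would try to exploit.

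Concretely, I would search for a candidate periodic tunnel of the following shape: the particle enters a fresh column from its left vertical wall just after a vertical encounter, collides with one horizontal wall (triggering a wedge that clears a triangular patch reaching $n+1 = ks$ rows up), then glides through the cleared patch and reaches the opposite vertical wall at a height and cutting-sequence offset which is the mirror of where it entered. It then detonates symmetrically on the right side and exits into the next fresh column through a still-solid vertical wall, mimicking the slope-3 picture in Figure \ref{threepics}. I would verify the conjecture by hand for the smallest case (e.g.\ $s=2, n=1$), then formulate an induction on columns showing that the exit state of each column is a translate of the entry state of the next. The arithmetic content of the induction reduces to the observation that after $s$ horizontal encounters the particle moves $s$ units vertically --- exactly one wedge row --- so that after $n+1$ horizontal encounters it has advanced $k$ units horizontally and the top of the wedge lines up with the next vertical-encounter row. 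Combined with Corollary \ref{niceslots}-style bookkeeping of which horizontal walls remain solid, one would write explicit formulas for the set of cleared walls inside each column and check that the pattern closes up.

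The main obstacle will be the one that blocks a full proof of Conjecture \ref{alwaysperiodic}: showing that the particle actually \emph{reaches} the periodic configuration from every non-degenerate starting point. Lemma \ref{continuitylemma} and Corollary \ref{strongercontinuity} give some leverage --- if I can identify a nonempty open set of ``good'' states in phase space from which the periodic tunnel closes up, then continuity arguments will propagate the conclusion to nearby starts. But unlike the single-wall bomb, wedge detonations couple neighboring columns through the triangular overlap, and one must rule out ``bad'' reorganizations in which a stray wall cleared by an earlier detonation derails a later one. I expect this transient analysis, rather than the identification of the periodic tunnel itself, to be the genuine difficulty, and it may require a separate argument for each $s$ or even a computer-assisted bound on the transient length in the spirit of the slope-146 discussion in Section 7.
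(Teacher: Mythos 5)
You should note first that the statement you were asked to prove is presented in the paper as a \emph{conjecture}: the paper offers only a heuristic mechanism (the period-4 setup of Figure \ref{sdividesn+1pic}) and explicitly concedes that no proof is known, precisely because one cannot show the required configuration ever arises. So the honest bottom line of your proposal --- that the transient analysis, not the identification of the periodic regime, is the genuine difficulty --- agrees with the paper's own assessment, and neither you nor the paper closes that gap.

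That said, the periodic structure you propose to verify is not the one the divisibility hypothesis actually produces, and your column-by-column plan would fail at its first step. You model the tunnel on the slope-3, single-wall picture of Figure \ref{threepics}: enter a fresh column, detonate, traverse the cleared patch, exit into the next column. But a wedge of size $n$ clears $n+1$ rows of horizontal walls on the far side of the point of impact, so after the particle comes back up through the hit wall its next solid horizontal wall is at height $h+(n+1)$; travelling that far vertically at slope $s$ carries it $(n+1)/s = k$ columns sideways. The dynamics therefore cannot close up within a single column, and the role of $s\mid(n+1)$ is not to make one column's worth of bouncing line up but to guarantee that the wall at height $h+(n+1)$ is met at the \emph{same cutting-sequence stage} as the wall at height $h$. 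The resulting tunnel --- the one seen in simulation in the right panel of Figure \ref{differenttunnels} --- is a \emph{bidirectional} period-4 tunnel running along a line of slope $s$, built from successive pairs of opposite solid corners, not a unidirectional column-by-column tunnel. Your ``arithmetic coincidence'' is the right one, but it is deployed in the wrong geometry; the subsidiary claim that after $s$ horizontal encounters the particle has advanced one wedge row is also off (it advances $s$ rows and one column). And even granting a correct local description, the universal quantifier over starting points remains untouched: Lemma \ref{continuitylemma} gives open sets of starts that agree for finitely many collisions, but supplies no mechanism forcing an arbitrary start into the good corner-to-corner configuration --- which is exactly why the paper leaves this as Conjecture \ref{sdividesn+1} rather than a theorem.
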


\begin{figure}[h]
\begin{center}
\includegraphics[height=6 cm]{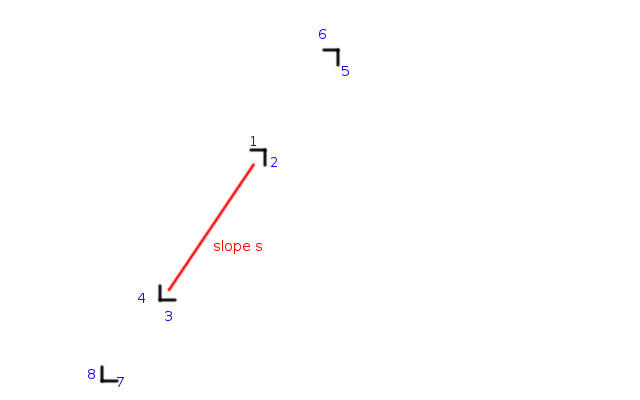}
\end{center}
\caption{\label{sdividesn+1pic}
The setup which causes a particle to tunnel with period 4, assuming that $s|(n+1)$ and it doesn't find something else to do first.
}
\end{figure}

The reason this seems to hold true lies in the setup shown in Figure \ref{sdividesn+1pic}. If we have a clear space between two solid opposite corners, the slope of a path between them is $s$, and the particle is at a point in its cutting sequence to hit both walls of one corner in quick succession (HV in the picture, but VH also works), then its next two collisions will be with the two walls of the other corner. Since $s>1$, the particle heads back through wall 1 and into the triangle cleared by hitting wall 1. Hitting wall 1 at height $h$ cleared horizontal walls through height $n+h$, so the next solid horizontal wall (wall 5) is at height $(n+1)+h$, which the particle will encounter at the same point in its cutting sequence as wall 1 if $s|(n+1)$. Assuming that wall 6 is also solid, the particle will treat 5 and 6 the same way it treated 1 and 2, heading off towards 7 and 8 \ldots

So we see that if the particle ever hits two corners in succession, while having solid corners appropriately spaced further along the line between them, it will create a bidirectional tunnel with period 4. The problem in proving this is that, since the corners need a space whose diagonal has slope $s$ between them, the setup cannot come around immediately. Sometimes it happens quickly; sometimes it takes a long time; sometimes, as Figure \ref{differenttunnels} demonstrates, the particle manages to settle into an entirely different periodic tunnel before the conditions arise to start this one. The conditions for this tunnel are simple enough that it seems likely they will always arise unless preempted by another tunnel, but we do not yet have a proof.

\begin{figure}[h]
\begin{center}
\includegraphics[height=8 cm]{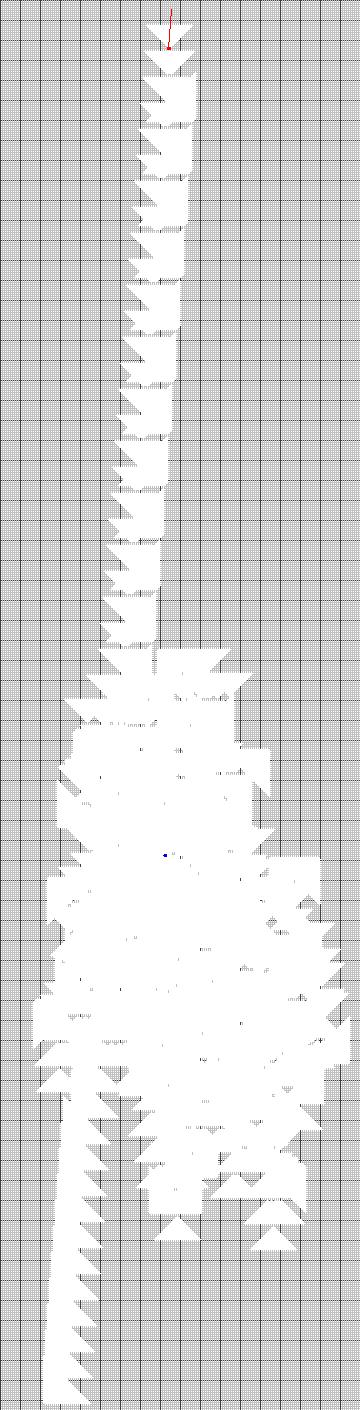}
\includegraphics[height=8 cm]{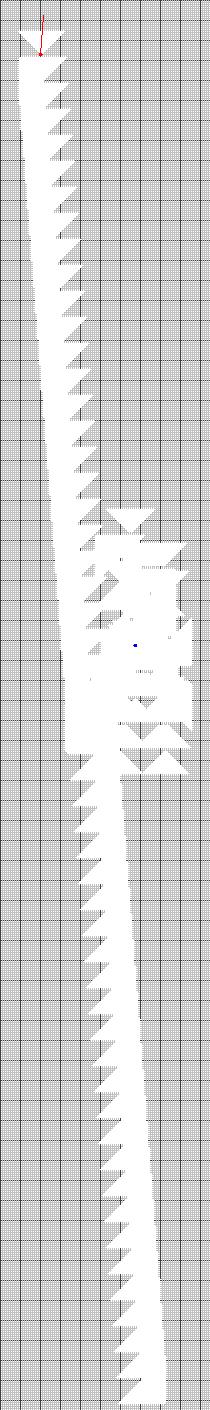}
\end{center}
\caption{\label{differenttunnels}
Two different tunnels dug by a particle with slope 13 and an unwinged wedge bomb of size 12. The one on the left started on the left-hand wall; the one on the right started one stage further into the cutting sequence, and is the type of tunnel discussed in Conjecture \ref{sdividesn+1},
}
\end{figure}

\begin{remark}
\label{varyingsetup}
Figure \ref{differenttunnels} is the first case we've discussed where the same bomb, initial direction, and initial wall configuration can produce different tunnels depending on the starting location. We're not aware of any case where this happens for the single-wall bomb, though we see no compelling argument against the possibility. Since this direction and bomb shape can be attracted to multiple behavior patterns, this example also shows that the same bomb, direction, and starting location can produce different tunnels depending on the initial wall configuration - just lay the walls out as if the particle were partway through the appropriate tunnel at the given spot in its cutting sequence. We saw while proving Theorem \ref{everypointtunnels} that  varying the direction produces similar-but-different tunnels, and we'll see shortly that starting with slope 3 on the left-hand wall produces a variety of tunnels depending on the bomb shape. Thus we see that for all four of our choices when creating the system, varying the one while holding the other three constant can make the outcome fluctuate between multiple different tunnels.
\end{remark}

Conjecture \ref{sdividesn+1} would imply that every integer slope tunnels infinitely often for either wedge shape, since it's not hard to show that $s=1$ and $s=0$ tunnel frequently. The slope which tunnels most frequently, though, seems to be slope 3. In fact, starting from the left-hand wall, we can prove slope 3 tunnels for every winged wedge size not covered by Conjecture \ref{sdividesn+1}, suggesting that it probably tunnels for every winged wedge.

\begin{remark}
\label{chartskey}
Throughout the proofs of the next two results:
\begin{itemize}
\item Walls are indexed by their lower (if vertical) or leftmost (if horizontal) endpoint, and labeled H or V as appropriate.
\item The particle is at stage $m$ in its cutting sequence when it has encountered $m$ horizontal walls since the last vertical encounter, and therefore has $(3-m)$ horizontal encounters before the next vertical encounter.
\item The stage and direction given in each step are for immediately after the collision or encounter in question.
\end{itemize}
We also observe that if the particle hits wall $(a,b)$ going up, with wedge size $n$, the solid horizontal walls at the top have coordinates $(x, b+n+1)$ for some $x$; the solid vertical walls have coordinates $(x, b+n)$; the solid vertical walls along either diagonal side of the triangle have coordinates $(-m+a, m+b-1)$ or $(m+a+1, m+b-1)$ for some integer $m \leq n$; and the solid horizontal walls along the diagonal sides have coordinates $(m+a, m+b-1)$ or $(-m+a, m+b-1)$. One can find the corresponding boundaries for other directions similarly, but this is the direction which is most frequently relevant.
\end{remark}

\begin{lemma}
\label{s3n0mod3}
A particle with slope 3 and a wedge-shaped bomb of size $3k$, $k \geq 1$, tunnels horizontally with period 14 when started from the left-hand wall.
\end{lemma}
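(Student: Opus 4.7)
The plan is to track the particle through one full period of 14 collisions in direct analogy to the tables of encounters constructed in Lemma \ref{reorglemma} and Theorem \ref{unboundedperiodic}. Using the reflections from Section 2, I would first reduce to the case where the particle starts at some point $(0, y_0)$ on the left-hand wall moving up and to the right with slope 3, and fix the cutting-sequence stage at which the claimed tunnel begins. The first collision is with a horizontal wall at a height determined by $y_0$ and the slope; since the wedge bomb has size $3k$, that first collision clears an isosceles triangular region of height $3k$ squares on the far side of the impact wall. Using the conventions in Remark \ref{chartskey}, the solid boundary of the cleared region consists of the horizontal walls across the top row of the triangle, the pairs of vertical walls running along the two diagonal sides, and the two corner wings; all interior walls are gone.

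The crucial observation is that the wedge height $3k$ is commensurate with slope 3: a line of slope $\pm 3$ leaving the impact point crosses the top row of the wedge after exactly $k$ horizontal grid units, never meeting a diagonal-side wall first, and inside the wedge every horizontal wall has been erased. This reduces the particle's passage through any freshly cleared wedge to a single ``crossing'' from the impact wall to a predictable exit wall on the triangle's top row, independently of $k$. Essentially, each newly exploded wedge is traversed in one clean sweep along a slope-3 segment with only erased-wall passes, and the exit point is easy to describe combinatorially in terms of the impact point's column.

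Using this, I would build a table listing the next 14 collisions in sequence, recording for each step the wall collided with, the direction of motion afterwards, and the walls passed through since the previous collision, exactly as in Lemma \ref{reorglemma}. I expect to see the particle alternate clearing new wedges above and below its path as it bounces in slope-3 fashion, with each wedge's top-row wall becoming the impact wall for the next wedge on the opposite side; over the course of 14 collisions the collective effect should be that the particle returns to a configuration which is a pure horizontal translation of its state immediately after collision 1. Since every wall in the next period's region is still pristine, the pattern propagates indefinitely and the net per-period displacement is purely horizontal, giving a horizontal periodic tunnel of period 14.

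The main obstacle is the book-keeping in the table: the wedge bomb erases many walls per collision, and each subsequent collision depends on which walls remain solid from both the current wedge and the previously cleared wedges along the tunnel. The fact that the stated period $14$ does not depend on $k$ is a strong hint that the table is essentially uniform in $k$ once the commensurability of $3k$ with slope 3 is exploited to collapse the behavior inside each wedge into a single crossing step; the delicate part is verifying, at each of the 14 steps, which diagonal-side wall of the currently-relevant wedge is the one actually encountered, and that it is still solid when the particle meets it.
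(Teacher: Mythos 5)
Your strategy---track one explicit period in a table of collisions, then argue that the final state is a translate of the initial one over still-solid walls---is exactly the paper's method, so the plan would work if executed. However, the picture you are planning to verify is wrong in several concrete ways that would surface as soon as you filled in the table. First, the ``commensurability'' observation is off: after bouncing off $(0,1)$H and $(0,0)$H, the particle sweeps up through the fresh wedge and exits not through the top row of horizontal walls after $k$ columns but by \emph{colliding with the solid vertical wall} $(k+1,3k+1)$V at the wedge's upper corner, after $k+1$ columns; the identity of that wall (horizontal versus vertical, and which corner) is exactly the delicate point you flag at the end, and it changes the subsequent dynamics completely. Second, the orbit is not an alternation of wedges above and below a monotone slope-$3$ path with each top row serving as the next impact wall: after that vertical collision the particle reverses horizontal direction, is sent back down and to the left, strikes the floor of the \emph{downward} wedge at height $-3k-1$ (to the left of $x=0$), and only after two more direction reversals resumes rightward progress; the net displacement over $14$ collisions is $(8k+2,0)$.

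The one genuinely load-bearing step your proposal defers---checking that the walls needed in the next period are still solid---is also where the naive ``everything to the right is pristine'' claim fails: the period-$14$ orbit revisits heights near $-3k-1$, so you must verify that the translated floor walls at $(7k+2,-3k-1)$ (horizontal and vertical) were not erased by any of the intervening wedge explosions, which requires comparing their coordinates against the wedges detonated at each of the $14$ collisions. The paper does this explicitly (only two collisions come close, and both miss for all $k\geq 1$). So: right method, but the anticipated table is not the true one, and the proof is not complete until the actual $14$-step orbit is computed and the solidity check is carried out.
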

\begin{proof}
We track one complete period in Table \ref{0mod3table}

\begin{table}[h]
\begin{tabular}{llll}
Step & Encounter and Type & Stage & Direction \\
1    & (0,1)H             & 1     & DR        \\
2    & (0,0)H             & 2     & UR        \\
3    & (k+1, 3k+1)V       & 0     & UL        \\
4    & (k, 3k+2)H         & 1     & DL        \\
5    & (-k, -3k-1)V       & 0     & DR        \\
6    & (-k, -3k-1)H       & 1     & UR        \\
7    & (2k+1. 6k+3)H      & 2     & DR        \\
8    & (4k+1, 4)H         & 1     & UR        \\
9    & (4k+1, 5)H         & 2     & DR        \\
10   & (5k+2, -3k+3)V     & 0     & DL        \\
11   & (5k+1, -3k+3)H     & 1     & UL        \\
12   & (2k+2, 6k+2)V      & 0     & UR        \\
13   & (2k+2, 6k+3)H      & 1     & DR        \\
14   & (6k+2,-6k+2)H      & 2     & UR        \\
15   & (8k+2, 1)H         & 1     & DR        \\
16   & (8k+2, 0)H         & 2     & UR       
\end{tabular}
\caption{\label{0mod3table}
The first sixteen steps of a particle with slope 3 and wedge size $3k$, $k \geq 1$, starting from the left-hand wall (i.e. stage 0).
}
\end{table}

Comparing steps 15 and 16 to steps 1 and 2, we see that the particle has translated $(8k+2, 0)$ while retaining the same direction and stage of the cutting sequence. The only walls which have been erased to the right of the particle's current location are those erased as a consequence of steps 15 and 16. Th only time the particle went left of $x=0$ in the first fourteen steps was for steps 5 and 6. We can verify that the corresponding walls at $(7k+2, -3k-1)$ are both still solid: the only steps erasing walls anywhere near are steps 11 (compared to which the walls are $2k+1$ over horizontally, 4 down, which is too far wide for $k > 1$ and too far down for $k = 1$) and 14 (higher for$ k > 1$, equal + to the right for$ k = 1$). We can also tell that the paths to both walls will be empty thanks to step 16. It follows that, since everything to the right is identical to the conditions after step 2 and so are the relevant conditions to the left, the particle tunnels with period 14. 
\end{proof}

\begin{figure}[h]
\begin{center}
\includegraphics[height=6 cm]{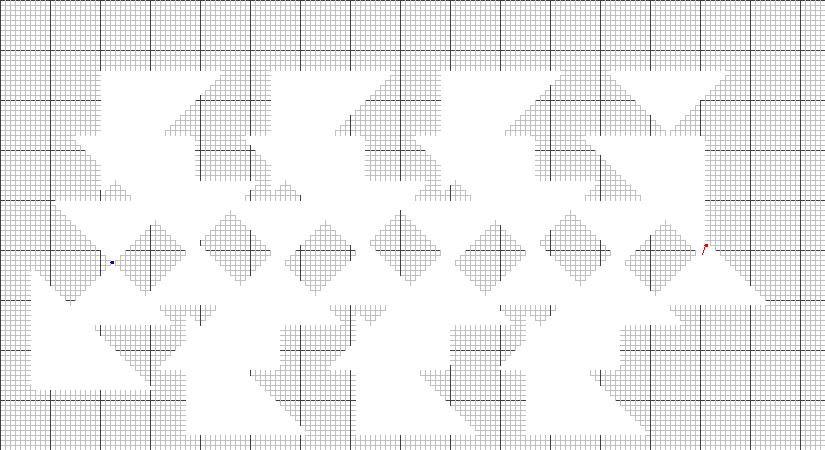}
\end{center}
\caption{\label{s3n12}
The periodic tunnel dug by a particle with slope 3 and a wedge whose size is divisible by 3 (in this case, a winged wedge of size 12)
}
\end{figure}

For the winged wedge, in particular, slope 3 displays even more structure in simulation.

\begin{theorem}
\label{s3n1mod3}
Let $n$ be an integer such that $n = 1$ mod $3$ and $p$ be the largest integer such that $2^p | n+2$. Then a particle started from the left-hand wall with slope 3 and a winged wedge bomb of size $n$ tunnels with period $6+2p$.
\end{theorem}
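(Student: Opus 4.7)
The plan is to extend the explicit tracking approach of Lemma \ref{s3n0mod3} by overlaying an induction on $p$, since the period $6 + 2p$ grows with $p$ and a uniform hand-tabulation is not feasible. I would prove the theorem by strong induction on $p = v_2(n+2)$: the base case $p=0$ closes a period after six collisions, and each additional factor of $2$ in $n+2$ forces exactly two extra collisions before the period closes.

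First I would place coordinates so that the particle enters the starting square from the left-hand wall at stage $0$ of the cutting sequence $\overline{HHHV}$, with initial collisions organized as in the setup for Lemma \ref{s3n0mod3}. The winged wedge of size $n$ erases a triangular array of walls above each hit, and the hypothesis $n \equiv 1 \pmod 3$ ensures that a slope-$3$ trajectory rising from a hit wall exits the triangle through a horizontal wall in the top row rather than through one of the diagonal sides, so that the $HHHV$ cutting sequence aligns properly with the wedge geometry. Using the conventions of Remark \ref{chartskey}, I would catalogue precisely which walls of the first triangle are solid and which are cleared after the opening collisions.

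For the base case $p = 0$ (i.e.\ $n + 2$ odd), I would tabulate six collisions in the style of Table \ref{0mod3table} and verify three things directly: (i) after the sixth collision the particle is at the same stage of the cutting sequence and moving in the same direction as at the outset; (ii) the horizontal displacement over these six collisions is a fixed vector, so the cycle closes; and (iii) no wall required for the next pass has been disturbed by the current one. The oddness of $n+2$ is precisely what places the pivotal wall encountered in step $6$ at a location where the particle exits the triangular cavity rather than being deflected back into it by a midpoint wall.

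For the inductive step $p \geq 1$, I would show that step $6$ of the base analysis is replaced by two extra collisions with a pair of walls lying at the midpoint height $(n+2)/2$, which is an integer exactly when $n+2$ is even. After these two extra collisions, the local wall configuration should be congruent, under translation and a vertical reflection, to a fresh initial configuration of the same problem with parameter $n' = (n-2)/2$; a short arithmetic check gives $n' \equiv 1 \pmod 3$ and $v_2(n'+2) = p-1$, so by induction the remaining period takes $6 + 2(p-1)$ collisions, yielding the asserted total of $6+2p$. The main obstacle will be making this self-similarity rigorous: I must exhibit the explicit rigid motion carrying the post-two-collisions state onto a genuine fresh start for parameter $n'$, and confirm that the outer walls on which the reduced problem's period relies have not been spoiled by any collision outside the reduced region. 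This cross-level bookkeeping of solid versus erased walls, together with the verification that successive pairs of extra collisions all fit inside the newly enlarged triangle without interference, is where I expect the argument to require the most care.
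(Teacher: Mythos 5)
Your overall shape is the right one---the paper also inducts on $p$ via the halving self-similarity, and your arithmetic ($n'=(n-2)/2$ gives $n'\equiv 1 \bmod 3$ and $v_2(n'+2)=p-1$, with the midpoint height $(n+2)/2$ ceasing to be an integer exactly at the base case) matches Table \ref{1mod3maintable}. But the inductive step as you state it has a genuine gap: you invoke the inductive hypothesis for the system whose \emph{bomb} is the wedge of size $n'$, while your particle continues to carry the size-$n$ bomb. A congruence of wall configurations at a single instant is not a conjugacy of the two dynamical systems; from the very next collision onward one system erases size-$n$ wedges and the other size-$n'$ wedges, so the conclusion of the inductive hypothesis (the entire remaining cycle, including the turnaround where the particle heads up-left and closes the period) cannot be transplanted. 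The paper uses the self-similarity only for the one thing it legitimately gives: the locations of the bounce points in the initial rightward zigzag (steps $3$ through $2p+3$), which are determined solely by the ceiling and the right diagonal edge of the \emph{first} wedge---data that genuinely is congruent under the halving---because every wedge dropped during the zigzag lands above the ceiling or below the diagonal and never interferes. The turnaround and the closing of the cycle (steps $2p+4$ through $2p+8$) are then computed explicitly for wedge size $n$. That this is unavoidable is visible in the answer: the period's displacement vector depends on $n$ itself (through $k$), not merely on $p$, so the tail cannot be inherited from the reduced problem.

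There is a second, independent problem with the accounting. Even granting your reduction, if the state after the two extra collisions were congruent to a \emph{fresh start} of a system that tunnels with period $6+2(p-1)$, then your trajectory would from that point be eventually periodic with period $6+2(p-1)=2p+4$; prepending finitely many collisions lengthens the pre-period, not the period. To get period $2p+6$ you must show the two extra collisions recur \emph{inside} the cycle, which again forces you to follow the genuine size-$n$ dynamics all the way around rather than delegating to the inductive hypothesis. (Two smaller inaccuracies point the same way: the state reached after the two extra collisions is a pure translate of the state after the two set-up collisions---stage $2$, heading up-right---not a stage-$0$ fresh entry through a vertical wall, and no vertical reflection is involved; and the two extra collisions are one at the ceiling height $n+2$ and one at the midpoint height $(n+2)/2$ on the diagonal, not a pair at the midpoint.) Your plan becomes correct if you weaken the inductive claim to ``the zigzag bounce points occur at the translated positions predicted by the $n'$-configuration until integrality fails at step $2p+3$'' and then verify the five-collision tail and the solidity of the walls it needs by direct computation with wedge size $n$.
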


\begin{proof}
We'll handle the case $p=0$ separately. If $p=0$, then $n=1$ mod $6$ and we can just set $n=6k+1, k\geq0$. The first eight steps of that case are in Table \ref{1mod6table}.

\begin{table}[h]
\begin{tabular}{llll}
Step & Encounter and Type & Stage & Direction \\
1    & (0,1)H             & 1     & DR        \\
2    & (0,0)H             & 2     & UR        \\
3    & (2k+1, 6k+3)H      & 2     & DR        \\
4    & (3k+2, 3k+1)V      & 0     & DL        \\
5    & (3k+1, 3k+1)H      & 1     & UL        \\
6    & (0, 12k+5)H        & 2     & DL        \\
7    & (-k, 9k+3)V        & 0     & DR        \\
8    & (-k, 9k+3)H        & 1     & UR       
\end{tabular}
\caption{\label{1mod6table}
The first sixteen steps of a particle with slope 3 and wedge size $3k$, $k \geq 1$, starting from the left-hand wall (i.e. stage 0).
}
\end{table}
After step 8, the particle will pass through horizontal wall $(0, 12k+4)$ at stage 2, heading up and to the right with the only walls cleared above it those resulting from hitting $(0,12k+5)$ while moving up. This is exactly the situation after step 2, translated $12k+4$ squares straight up, so the system is periodic with period 6 as desired.

If $p\geq 1$, we set $n=3(2k+1)2^p-2$ and follow Table \ref{1mod3maintable}.

\begin{table}[h]
\begin{tabular}{llll}
Step & Encounter and Type  & Stage & Direction \\
1    & (0,1)H         & 1     & DR        \\
2    & (0,0) H    & 2     & UR        \\
3    & $((2k+1)2^p, 3(2k+1)2^p)$ H & 2     & DR        \\
4    & $((2k+1)(2^p+2^{p-1}), 3(2k+1)(2^p-2^{p-1})$H            & 2     & UR        \\
5    & $((2k+1)(2^p+2*2^{p-1}),  3(2k+1)2^p)$H                                                            & 2     & DR        \\
...  & ...                                                                                                                            & ...   & ...       \\
7    & $((2k+1)(2^p + 2*2^{p-1} + 2*2^{p-2},  3(2k+1)2^p)$ H              & 2     & DR        \\
...  & ...                                                                                                  & ...   & ...       \\
$2p+3$ & $((2k+1)(2^p + 2*2^{p-1} + ... + 2p^0),  3(2k+1)2^p)$ H               & 2     & DR        \\
eq.  & $((2k+1)(3*2^p-2), 3(2k+1)*2^ p)$   H                                                             & 2     & DR        \\
$2p+4$ &    $( 3(2k+1)*2^p-3k-1,  3(2k+1)2^p-3k-2)$V & 0     & DL        \\
$2p+5$ & $(3*2^p(2k+1)-3k-2,  3(2k+1)2^p)-3k-2)$ H                                                             & 1     & UL        \\
$2p+6$ & $(2*2^p(2k+1)-4k-2, 2* 3(2k+1)2^p-1) $H                                                             & 2     & DL        \\
$2p+7$ & $(2^p(2k+1)-3k-1,  3(2k+1)*2^p+3k$)V      & 0     & DR        \\
$2p+8$ & $(2^ p(2k+1)-3k-1, 3(2k+1)*2^p)+3k)$H                                                          & 1     & UR        \\
thru & $(2*2^p(2k+1)-4k-2, 2* 3(2k+1)*2^p-2)$H                                                         & 2     & UR       
\end{tabular}
\caption{\label{1mod3maintable}
The first $2p+8$ steps of a particle with slope 3 and wedge size $3(2k+1)2^p-2$, starting from the left-hand wall.
}
\end{table}

After step 4, the pattern of cleared and solid walls up and to the right is exactly as if the particle were headed up into a wedge of size $3(2k+1)2^{p-1}-2$, created by hitting the horizontal wall at $((2k+1)*(2^p+2^{p-1}), 3(2k+1)*(2^p-2^{p-1}+1)$ - in other words, as if we'd just finished step 2 with $p$ reduced by one. Because of this, induction on $p$ shows that we will be bouncing up and down while going to the right through step $2p+3$, after which continuing the pattern would imply a wall with fractional coordinates. Instead, the particle hits a vertical wall, then a horizontal wall, starts heading up and to the left, and passes through horizontal wall $(2*2^p(2k+1)-4k-2, 3(2k+1)*2^p)$ at cutting sequence stage 3 between collisions $(2p+5)$ and $2p+6$. After step $2p+8$, the encounter which is called out with its own line is exactly step 2 translated by $(2*2^p(2k+1)-4k-2, 2* 3(2k+1)*2^p-2)$; the same stage, the same direction, and once again the only walls cleared above are the wedge based on the wall immediately above. Since nothing in steps $3$ through $2p+8$ went below $y=0$, this is enough to show that the system is periodic with period $2k+6$ and displacement $(2*2^p(2k+1)-4k-2, 2* 3(2k+1)*2^p-2)$.
\end{proof}

\begin{figure}[h]
\begin{center}
\includegraphics[height=6 cm]{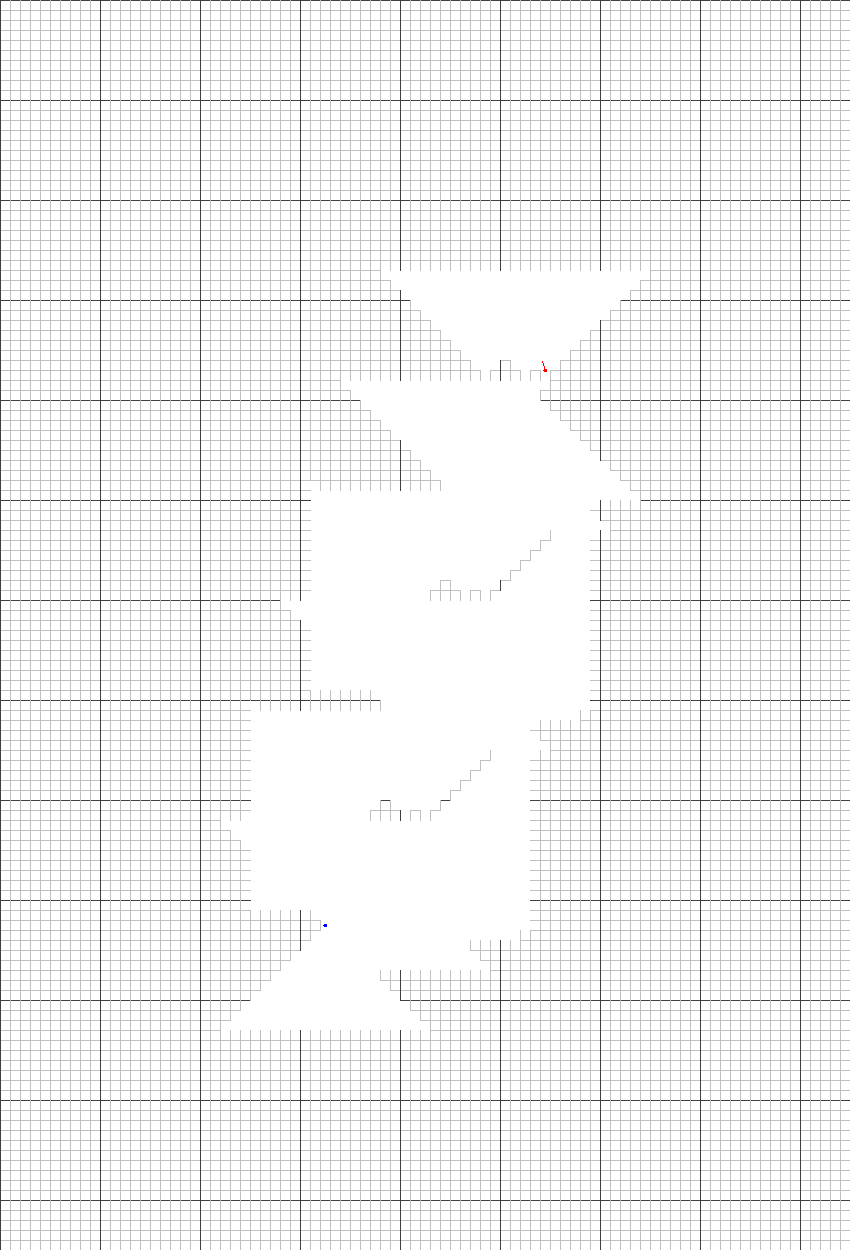}
\end{center}
\caption{\label{s3n10}
The periodic tunnel dug by a particle with slope 3 and a winged wedge bomb of size 10.
}
\end{figure}

This theorem is the only result in this section where we have had to specify that we're talking about winged, rather than unwinged, wedges. A natural question is to what extent the same holds for unwinged wedges.

\begin{corollary}
Let $n=3(2k+1)2^p-2$. A particle starting from the left-hand wall with slope 3 and an unwinged wedge bomb of size $n$ tunnels with period $2p+6$ if $k>0$.
\end{corollary}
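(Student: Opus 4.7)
The plan is to reduce the statement to Theorem~\ref{s3n1mod3}: I will argue that, when $k>0$, the particle using the unwinged wedge executes the same trajectory as the one shown periodic in the winged case, so the same period $2p+6$ applies. An unwinged wedge of size $n$ clears a strict subset of what the winged wedge clears --- the difference is exactly the two corner edges at the far end of the triangle. Consequently the unwinged trajectory can diverge from the winged one only if, at some step, the winged trajectory passes through (or grazes) one of those corner edges, which the unwinged bomb leaves solid. The opposite direction cannot occur, since any wall missing in the unwinged case is also missing in the winged case.

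The first step is to catalogue, for each collision in the winged-wedge proof (Tables~\ref{1mod6table} and~\ref{1mod3maintable}), the coordinates of the two corner edges that would have been erased in the winged case but remain in the unwinged case. Using Remark~\ref{chartskey} and its rotations by multiples of $\pi/2$, these positions are explicit in terms of the collision point $(a,b)$, the wedge size $n$, and the direction of incidence. The second step is to compare every pass-through event recorded in the two tables against this catalogue and verify that no pass-through coordinate coincides with an outstanding corner edge.

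I expect to proceed by induction on $p$, following the inductive structure already used in the proof of Theorem~\ref{s3n1mod3}. The base case $p=0$ is checked directly against Table~\ref{1mod6table}; here $k\geq 1$ gives enough horizontal room that the corner edges of the two bombs fired in steps $1$--$6$ sit strictly outside the columns the particle revisits. For the inductive step, the observation from Theorem~\ref{s3n1mod3} that the post-step-$4$ configuration reproduces the post-step-$2$ configuration with $p$ replaced by $p-1$ means the inductive hypothesis handles the corner edges of every bomb fired from step $4$ onwards. Only the corner edges of the outermost bomb --- the step-$3$ collision at $((2k+1)2^p,\,3(2k+1)2^p)$H --- remain to be checked against the return-sweep pass-throughs of steps $2p+4$ through $2p+8$.

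The hard part is exactly that last comparison. The corner edges of the step-$3$ bomb lie at horizontal offset $n+1 = 3(2k+1)2^p-1$ from the collision point, and this distance must be shown to disagree with every $x$-coordinate the particle passes through during the return sweep. The hypothesis $k>0$ will be used in precisely this inequality: for $k\geq 1$ the offset is strictly larger than the rightmost return-sweep pass-through, while for $k=0$ the offset meets a return-sweep pass-through exactly, which is why the corollary must exclude $k=0$. Apart from this comparison, the remaining work is routine bookkeeping, keeping track of incidence directions so that the rotated corner-edge formulas are applied correctly at each step.
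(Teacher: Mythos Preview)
Your reduction is the right one and matches the paper's: the unwinged trajectory coincides with the winged one provided the latter never passes through a wing edge, so one only has to check those edges. But your execution misidentifies which wing matters and where the obstruction occurs.

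The paper singles out the bomb fired at step~1, the hit on $(0,1)$H heading up. Its right wing sits at $(n,\,n+1)=\bigl(3(2k+1)2^p-2,\,3(2k+1)2^p-1\bigr)$, and for $k=0$, $p>0$ that wall is exactly one row below the collision point of step $2p+3$; the particle runs into it on the outgoing zigzag, not during the return sweep. For $p=0$ the same wing lands at $(6k+1,6k+2)$, which blocks the path to step~3 precisely when $k=0$. So the obstruction is the step-1 wing interfering at step $2p+3$ (or step $3$), not the step-3 wing interfering during steps $2p+4$ through $2p+8$ as you conjectured. Your decomposition drops steps~1 and~2 from the inductive-step check entirely --- you only flag step~3 as ``outermost'' --- and that is exactly where the real obstruction lives.

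There is also a gap in the induction itself. The post-step-4 picture ``looks like'' post-step-2 with $p$ replaced by $p-1$, but the bomb being fired at each subsequent collision is still of size $n=3(2k+1)2^p-2$, not the smaller $3(2k+1)2^{p-1}-2$. So the wing offsets do not shrink with the apparent scale, and the inductive hypothesis about the smaller-$p$ system does not directly control where the actual wings land. You would need an additional monotonicity argument (bigger wings lie strictly outside the region the sub-trajectory visits) to make this work, and then still check all of those far-out wings against the return sweep, not just one.

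The paper avoids all of this: it asserts that only one wing is ``possibly pertinent,'' names its coordinates, and checks directly that it obstructs step $2p+3$ exactly when $k=0$. That shortcut is the whole proof; your systematic catalogue-and-induct plan is in principle sound but both heavier than needed and, as written, aimed at the wrong edge.
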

\begin{proof}
The one possibly pertinent difference between a winged wedge of size $3(2k+1)2^p-2$ heading up from horizontal wall $(0,1)$ and an unwinged wedge of the same size is that for the unwinged wedge, the wall $(3(2k+1)2^p-2,3(2k+1)2^p-1)$ is solid. If $p>0$ and $k=0$, this is the wall immediately below the one hit in step $(2p+3)$, causing the particle to deviate from the periodic tunnel and invalidating the above proof. Indeed, an unwinged wedge of size 22 ($k=1, p=3)$ does not seem to result in any sort of tunnel. For wedge sizes where $k\geq1$, the proof holds. If $p=0$, the horizontal wall affected is $(6k+1, 6k+2)$, which is blocking the path taken for step 3 iff $k=0$.
\end{proof}

\begin{corollary}
\label{infinitebombs}
Let $q\geq 6$ be an even integer. Then there exist infinitely many winged wedges and infinitely many unwinged wedges causing a particle with slope 3 to tunnel with period $q$ when started from the left-hand wall.
\end{corollary}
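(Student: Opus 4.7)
The plan is to apply Theorem \ref{s3n1mod3} and the preceding unwinged-wedge corollary to a one-parameter family of wedge sizes tuned so that the period formula returns exactly $q$. Concretely, I would set $p = (q-6)/2$, which is a nonnegative integer because $q \geq 6$ is even, and consider the family
\begin{equation*}
n_k = 3(2k+1)2^p - 2, \qquad k = 0, 1, 2, \ldots .
\end{equation*}

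Two quick checks put each $n_k$ in the scope of Theorem \ref{s3n1mod3}: first, $n_k \equiv -2 \equiv 1 \pmod 3$; second, since $3(2k+1)$ is odd, the largest power of $2$ dividing $n_k + 2 = 3(2k+1)2^p$ is exactly $2^p$. The theorem therefore yields, for each $k$, a slope-$3$ particle launched from the left-hand wall with a winged wedge of size $n_k$ that tunnels with period $2p + 6 = q$. Because the $n_k$ are strictly increasing in $k$, this produces infinitely many distinct winged wedges of the desired period.

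For the unwinged case I would use the same family $\{n_k\}$ but restrict to $k \geq 1$, which is precisely the hypothesis of the corollary immediately preceding the statement. That corollary delivers tunnels of the same period $q$ for unwinged wedges of size $n_k$, once again for infinitely many $k$. The argument has no real obstacle; it is essentially bookkeeping, with all the substantive work already completed in Theorem \ref{s3n1mod3} and its unwinged-wedge companion. The only thing to notice is that fixing $p$ and varying $k$ over the odd coefficient $2k+1$ preserves the $2$-adic valuation of $n_k + 2$, which is why a single choice of $p$ yields a whole infinite family at the target period.
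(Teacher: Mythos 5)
Your argument is correct and is exactly the intended derivation: the paper states Corollary \ref{infinitebombs} without proof as an immediate consequence of Theorem \ref{s3n1mod3} and the preceding unwinged-wedge corollary, and your choice $p=(q-6)/2$ with the family $n_k=3(2k+1)2^p-2$ (taking $k\geq 1$ in the unwinged case) is precisely the bookkeeping that makes it explicit. The two checks you perform, $n_k\equiv 1\pmod 3$ and $v_2(n_k+2)=p$, are the right ones and both hold.
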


\section{Open problems}

The biggest open problem, which we've already discussed, is the question of whether there is some set of starting conditions for which the system never tunnels. We have a clear candidate in slope 2 and the simple bomb, but as of yet no idea how to prove that it never tunnels. It is also unknown whether there is any structured form of long-term behavior besides tunnels - simulations only produce tunnels and big blobs, and it is difficult to imagine what non-tunneling structured behavior might consist of, but again that is not a proof. 

We have done very little with varying the starting wall configuration; it would be interesting to know whether there is some slope which digs different tunnels with the single-wall bomb depending on the starting walls.

We have also only scratched the surface of the vast domain of possible bombs. It remains to be seen whether there are other scaleable sets of bombs which cause some nontrivial slope to tunnel for every possible bomb size, as we conjecture the winged wedges do for slope 3.

There are a number of possible variants to the system. A related family of problems involves looking at what happens when the particle clears away two-dimensional obstacles instead of one-dimensional; see \cite{bressaud} for the most recent research along these lines. Even sticking with one-dimensional obstacles, one could look at a different set of walls . Any regular grid would do nicely, and the triangular and hexagonal grids seem the most obvious next step. However, they would suffer from the problem of not being able to rely on conventional cutting sequences absent some clever new approach; our Lemma \ref{cuttingsequence} depended on being able to label the grid in a way which was invariant under reflection across every edge, which is not a property it shares with the other grids. More broadly, if one doesn't care about being able to support a wide variety of bombs, any infinite collection of walls can be bounced off and erased one at a time. 

Another variant, whose two-dimensional analogue Bressaud and Fournier also study, is to bound the region in which the particle can bounce by adding some uneraseable walls; for instance, we might specify that the particle has to remain in the part of $R^2$ for which $y \geq 0$, and reflects off the x-axis whenever it comes down there. With a sufficiently constrained region, whether the particle tunnels becomes of less interest than how quickly it tunnels and whether it clears all the removable walls in its direction of motion.

\appendix
\section{Integer Slope Simulations}

In order to get some kind of sense of how many slopes tunnel and how many clear big blobs, and how long it can take for a tunneling slope to start tunneling, the author has simulated particles starting at the center of the square with every even slope through 200 (odd slopes starting at the center of the square eventually hit corners). The full data is available \href{https://docs.google.com/spreadsheet/ccc?key=0AhFI-rEFDT-rdHk0Umstb09HUVNrM2g3ajlGNTl0VkE&usp=sharing}{online}; roughly speaking, half of the integer slopes tested start tunneling within the first 300,000 collisions, most of them doing so in the first 10,000. The same spreadsheet also includes tests of the first few integer slopes with various wedge sizes, which inspired the results in Section 10.

\end{document}